\newcommand{\scr}{\mathscr}
\newcommand{\IN}{\mathbb{N}}
\newcommand{\IZ}{\mathbb{Z}}
\newcommand{\IR}{\mathbb{R}}
\newcommand{\IC}{\mathbb{C}}
\newcommand{\cC}{\mathcal{C}}
\newcommand{\cE}{\mathcal{E}}
\newcommand{\cM}{\mathcal{M}}
\newcommand{\cP}{\mathcal{P}}
\newcommand{\cU}{\mathcal{U}}
\newcommand{\ga}{\alpha}
\newcommand{\gd}{\delta}
\newcommand{\gl}{\lambda}
\newcommand{\gi}{\iota}
\newcommand{\gk}{\kappa}
\newcommand{\gp}{\varphi}
\newcommand{\gs}{\sigma}
\newcommand{\gS}{\Sigma}
\newcommand{\ev}{\mathbf{E}}
\newcommand{\one}{\mathds{1}}
\newcommand{\supp}{\operatorname{supp}}
\newcommand{\diag}{\operatorname{diag}}
\newcommand{\Zsp}{\operatorname{span}_{\IZ}}
\newcommand{\Mat}{\operatorname{Mat}}
\newcommand{\Matp}{\operatorname{Mat}_p}
\newcommand{\IntMat}{\operatorname{IntMat}}
\newcommand{\IntMatp}{\operatorname{IntMat_p}}
\newcommand{\PU}{\operatorname{PU}}
\newcommand{\fmSet}{\operatorname{fmSet}}
\newcommand{\injto}{\hookrightarrow}
\newcommand{\ssq}{\subseteq}
\newcommand{\krs}{\boxplus}
\newcommand{\Polya}{P\'olya\ }
\renewcommand{\Re}{\operatorname{Re}}
\newcommand{\Di}{\,\operatorname{d}\!}
\newcommand{\what}{\widehat}
\newtheorem{lemma}{Lemma}
\newtheorem{cor}[lemma]{Corollary}
\newtheorem{thm}[lemma]{Theorem}
\newtheorem{prop}[lemma]{Proposition}
\theoremstyle{definition}
\newtheorem{defin}[lemma]{Definition}
\theoremstyle{remark}
\newtheorem{rem}[lemma]{Remark}
\title{A Semiring Structure for Generalised P\'olya Urns}
\author{Fabian Burghart}
\address{Department of Mathematics, Uppsala University, S-751 06 Uppsala, Sweden}
\email{fabian.burghart@math.uu.se}
\date{12 November 2021}
\keywords{Generalised P\'olya urns; intensity matrix; semiring morphism; Kronecker sum}
\subjclass[2020]{60C05 (Primary); 16Y60 (Secondary)} 
\begin{document}

\begin{abstract}
We define the notions of disjoint unions and products for generalised \Polya urns, proving that this turns the set of isomorphism classes of urns into a commutative semiring. The set of square matrices up to similarity by a permutation matrix is also a commutative semiring under the operations of direct sum and Kronecker sum, and we prove that assigning to an urn its intensity matrix leads to a morphism of semirings. 

Moreover, we show that a second semiring morphism exists, sending intensity matrices to their spectra. This, together with the existence of the first morphism has implications for the asymptotic behaviour of product urns, which are discussed. 
\end{abstract}

\maketitle

\section{Introduction}

\Polya urn models have, since the paper \cite{EP23} almost a century ago, undergone significant development and generalisation. Where Eggenberger and \Polya regarded urns in which balls have two colours, and upon drawing a ball, a fixed number of balls with the same colour are added to the urn, Bernstein \cite{Ber40} and Friedman \cite{Fri49} among others considered replacements that affect both colours. The type of model used in this paper was perhaps first suggested by Athreya and Karlin \cite[Remark 3.3]{AK68}: We allow for an arbitrary finite number of colours, and the replacements can themselves be random (according to some fixed probability distribution, and independently of everything else). 

However, even more general urns have been introduced and investigated, allowing for infinitely many colours (see the work of Bandyopadhyay and Thacker \cite{BT17} or Mailler and Marckert \cite{MM17}) or allowing for drawing multiple balls at once, for example in \cite{TM01}.

Typically, most work on \Polya urns focuses on obtaining limit theorems for the urn process (see, among others, \cite{AK68}, \cite{BP85}, \cite{smythe1996central}, \cite{Ja04}, \cite{flajolet2005analytic}, \cite{Ja06}, or \cite{Po08}) or on applying these theorems to obtain results in bioscience or computer science, see e.g. \cite{Ma02}, \cite{HJ15} for the latter. We also refer to Mahmoud's book \cite{Ma08} for a general overview of \Polya urns and their applicability. This paper takes a different viewpoint by focusing on the urn models themselves. Specifically, given two urn models $\cU$ and $\cU'$, we present two operations to construct new urns from them: The disjoint union $\cU\sqcup\cU'$ and the product $\cU\times\cU'$, where the names reflect the set-theoretic operations on the underlying colour sets. 

Since stochastic urn models tend to have a rather combinatorial nature, it is perhaps no surprise that they should form the structure of a semiring, thereby reflecting the algebraic structure of the non-negative integers, and of combinatorial classes under disjoint unions and Cartesian products \cite[4.2.3.4]{Ba09}. Compare also Example 1.4.9 in \cite{Ri17}, where it is shown how the category of finite sets with isomorphisms (i.e. bijections) is a categorification of the non-negative integers.

\subsection*{Main result} 

The main purpose of this paper is the following theorem, providing a pipeline that translates constructions on the level of \Polya urns to statements about their intensity matrices and the sets of eigenvalues thereof: 
\begin{thm}\label{thm:pipeline}
 The following sets, together with the listed operations as addition and multiplication respectively, form commutative semirings: 
 \begin{itemize}
  \item The set $\PU$ of strict isomorphism classes of \Polya urns, together with disjoint unions and products
  \item The set $\IntMatp$ of equivalence classes of intensity matrices under permutation similarity, together with direct sums and Kronecker sums
  \item The set $\fmSet_\IC$ of finite multisets of complex numbers, together with disjoint unions and Minkowski sums. 
 \end{itemize}
 Moreover, there are morphisms $\Phi$ and $\sigma$ of semirings as follows 
 \begin{center}
  \begin{tikzcd}
   \PU \arrow[r, "\Phi"] & \IntMatp \arrow[r, "\sigma"] & \fmSet_\IC
  \end{tikzcd}
 \end{center}
 where $\Phi$ assigns to an isomorphism class of \Polya urns its equivalence class of intensity matrices, and $\sigma$ maps an equivalence class of activity matrices to the spectrum of a representative. 
\end{thm}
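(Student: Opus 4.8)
The plan is to treat the two assertions of the theorem separately: first the verification that each of the three listed sets is a commutative semiring, and then the verification that $\Phi$ and $\sigma$ respect both operations together with both units. Since $\Phi$ need not be injective on isomorphism classes — distinct replacement distributions may share the same mean, hence the same intensity matrix — the semiring structure on $\PU$ cannot simply be transported along $\Phi$ and must be established directly. By contrast, $\IntMatp$ and $\fmSet_\IC$ are concrete enough that their axioms reduce to standard identities for the Kronecker product and sum and for multiset arithmetic.

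For $\fmSet_\IC$, commutativity and associativity of disjoint union and of the Minkowski sum $A\krs B=\{a+b:a\in A,\,b\in B\}$ (counted with multiplicity) are immediate; the additive unit is the empty multiset, the multiplicative unit is the singleton $\{0\}$, distributivity follows by counting, and the empty multiset annihilates. For $\IntMatp$, the direct sum $\oplus$ and Kronecker sum $\krs$ are plainly associative, and I would check commutativity up to permutation similarity by exhibiting the intertwining permutations: block transposition for $\oplus$, and the commutation (perfect-shuffle) matrix $K$, which satisfies $K(A\otimes B)K^{-1}=B\otimes A$ and hence $K(A\krs B)K^{-1}=B\krs A$, for $\krs$. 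The additive unit is the empty ($0\times0$) matrix, the multiplicative unit is the $1\times1$ zero matrix $[0]$ (since $A\krs[0]=A\otimes 1+I\otimes 0=A$), and distributivity and annihilation follow from bilinearity of $\otimes$ and a suitable reindexing permutation. For $\PU$ I would instead exhibit, for each axiom, a canonical bijection of the underlying colour sets — e.g.\ $C\times C'\cong C'\times C$ and $C\times(D\sqcup E)\cong(C\times D)\sqcup(C\times E)$ — and check that it intertwines the replacement distributions, so that it is an isomorphism of urns; the units are the empty urn and the appropriate single-colour urn.

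For the morphisms, well-definedness is routine: relabelling colours conjugates the intensity matrix by a permutation, and permutation-similar matrices share their spectrum with multiplicity. That $\Phi$ sends $\sqcup$ to $\oplus$ is clear, because colours from different components of a disjoint union never interact, so the intensity matrix is block diagonal. That $\sigma$ sends $\oplus$ to $\sqcup$ is the factorisation $\det(\lambda I-(A\oplus B))=\det(\lambda I-A)\det(\lambda I-B)$, and that it sends $\krs$ to the Minkowski sum follows from $(A\krs B)(u\otimes v)=(\lambda+\mu)(u\otimes v)$ whenever $Au=\lambda u$ and $Bv=\mu v$, with the correct multiplicities obtained by simultaneously triangularising $A\otimes I$ and $I\otimes A'$ through Schur decompositions of the two factors.

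The main obstacle is the multiplicativity of $\Phi$, i.e.\ the identity $\Phi(\cU\times\cU')=\Phi(\cU)\krs\Phi(\cU')$. This rests on the precise definition of the product urn and requires computing the mean replacement when a colour $(i,i')$ is drawn: the contribution must split additively into the $\cU$-replacement acting on the first coordinate and the $\cU'$-replacement acting on the second, producing exactly $A\otimes I+I\otimes A'$. Getting this bookkeeping right — in particular accounting for the activities and for how the product's drawing mechanism factors through the two coordinates — is the crux of the argument; once it is in place, the remaining morphism properties and the Schur-triangularisation step for $\sigma$ are comparatively standard.
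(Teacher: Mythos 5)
Your proposal is correct and follows essentially the same route as the paper: each of the three semirings is verified directly (the paper handles $\IntMat_p$ in the appendix with exactly the commutation-matrix and block-permutation arguments you describe), $\Phi$ is additive by block-diagonality and multiplicative by the mean-replacement computation for a drawn colour $(i,j)$ that you correctly identify as the crux (the paper's Lemmas~\ref{lem:Aprod1} and~\ref{lem:Aprod2}, where the activity $a_i+a'_j$ cancels against the conditional drawing probabilities to give $A\otimes I+I\otimes A'$), and $\sigma$ is handled via the spectrum of direct and Kronecker sums. The only cosmetic differences are that the paper encodes multisets as formal sums $\sum_z m(z)x^z$ rather than arguing on multisets directly, and cites Horn--Johnson for the spectrum of $A\krs A'$ where you sketch the Schur-triangularisation proof.
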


After giving a definition of (generalised) \Polya urns in Section~\ref{sec:prelim}, we will introduce the relevant structure for $\PU$ in Section~\ref{sec:semiring} and show that this structure indeed forms a commutative semiring in Theorem~\ref{thm:PUsr}. The set $\IntMatp$ will be properly defined in Definition~\ref{defin:IntMatp}, and proven to be a semiring in the appendix. The remaining statements will be verified in Section~\ref{sec:intensity}. 

Finally, Section~\ref{sec:properties} discusses some of the consequences of Theorem~\ref{thm:pipeline} with regards to properties that are of interest when proving limit theorems, and Section~\ref{sec:examples} illustrates two special cases.

%%%%%%%%%%%%%%%%%%%%%%%
\section{Preliminaries} \label{sec:prelim}

\subsection*{Some (linear) algebra} Let $R\in\{\IZ,\IR\}$. Recall that for any set $Q$, there exists the free $R$-module generated by $Q$, consisting of functions $Q\to R$ which are $0$ at almost all points (for $R=\IR$ this is merely the real vector space with basis $Q$). We denote this module by $R^Q$. We will not distinguish between a vector in $R^Q$ and a map $Q\to R$ if $Q$ is a finite set (although the usual notation of a vector as a column or row vector would additionally require the choice of a linear order on the set $Q$ -- we circumvent this by assuming that $Q$ comes equipped with one). For a vector $v\in R^Q$, we denote the entries by $v_i, i\in Q$. Moreover, any map $\gp:Q\to Q'$ functorially induces an  $R$-linear map $\hat\gp:R^Q\to R^{Q'}$ via
\begin{center}
 \begin{tikzcd}
  Q \arrow[d, "\gp"'] \arrow[rd, "\iota'\circ\gp" description] \arrow[r, "\iota", hook] & R^Q \arrow[d, "\exists!\hat\gp", dashed] \\
  Q' \arrow[r, "\iota'"', hook]                                                         & R^{Q'}                                  
 \end{tikzcd}
\end{center}
using the universal property of free modules, where $\iota$ denotes the embedding of $Q$ as a basis into $R^Q$ (by sending $i\in Q$ to the map $\mathds{1}_{\{i\}}$), and where $\iota'$ is the analogous map for $Q'$. 

Given a free $R$-module $R^Q$, its non-negative cone $\{x\in R^Q:x_i\geq0\ \forall i\in Q\}$ will be denoted by $R_{\geq0}^Q$.

We will make frequent use of the direct sum and the Kronecker product of real matrices. Given $A\in \IR^{n\times m}$ and $B\in \IR^{n'\times m'}$, we have the respective block forms:
\[
 A\oplus B:=\begin{pmatrix}
             A & 0 \\
             0 & B
            \end{pmatrix}  \in \IR^{(n+n')\times(m+m')}
\]
and 
\[
 A\otimes B:=\begin{pmatrix}
             A_{11}B & \hdots & A_{1m}B\\
             \vdots  & \ddots & \vdots \\
             A_{n1}B & \hdots & A_{nm}B
            \end{pmatrix} \in \IR^{nn'\times mm'}.
\]
For square matrices $A\in\IR^{n\times n}, B\in\IR^{m\times m}$ we can also define the Kronecker sum via 
\[
 A\krs B:=A\otimes I_m + I_n\otimes B,
\]
where $I_k$ denotes the unit matrix of size $k, k\geq0$. For elementary properties of $\otimes$ and $\krs$, cf. e.g. \cite{BC08} and \cite{HJ91}. For two vectors $a,b$, by a slight abuse of notation, we also denote by $a\krs b$ the vector whose entries are all pairwise sums of entries from $a$ and $b$. More precisely, if $\diag(v)$ is the diagonal matrix having the vector $v$ on its main diagonal, then $a\krs b$ refers to the diagonal vector of the matrix $\diag(a)\krs\diag(b)$ which is itself diagonal. Observe that by definition, $a\krs b$ is the vector containing all possible entries of the form $a_i+b_j$, arranged in lexicographic order. 

For two finite sets $Q,Q'$ and two maps $a:Q\to R$ and $b:Q'\to R$, there is a canonically induced map $Q\sqcup Q'\to R$, where $Q\sqcup Q'$ denotes the disjoint union of $Q$ and $Q'$. Moreover, as $R$-modules we have $R^{Q\sqcup Q'} \cong R^Q\oplus R^{Q'}$, and will therefore denote by $a\oplus b$ the map induced by $a$ and $b$. If the linear orders on $Q$ and $Q'$ are extended to $Q\sqcup Q'$ in such a way that all elements of $Q$ come before any element of $Q'$, then $a\oplus b$ is the vector obtained by concatenating the vectors $a$ and $b$. 

Finally, we remark that the usual embedding of $\IZ$ in $\IR$ enables us to view any vector in $\IZ^Q$ also as a vector in $\IR^Q$, and we will occasionally use this.

%%%%%%%%%%%%%%%%%%%%%%%%%%%%%%%%%%%%%%%%%%%%%%%%%%%%%%%%%%%%%%%%%%

\subsection*{\Polya urns}

By a (generalised) \Polya urn, we mean a tuple 
\[
  \cU=\left(Q,\mu=(\mu_i)_{i\in Q},a,X_0\right),
\]
consisting of the following data:
\begin{itemize}
  \item A finite set $Q$ whose elements are called types or colours,
  \item A collection $\mu$ of probability measures $\mu_i\in \cM_P(\IZ^Q)$ for $i\in Q$, supported within
  \begin{equation}\label{eq:supp}
    S_{Q,i}:=\left\{x\in \IZ^Q:x_j\geq -\gd_{ij}\quad \forall j\in Q\right\},
  \end{equation}
  \item A map $a\in \IR_{\geq0}^Q$ assigning to each colour its activity,
  \item A map $X(0)\in \IZ_{\geq0}^Q$ called the initial configuration,
\end{itemize}
such that for any $i\in Q$ with $a_i=0$, we have $\mu_i=\gd_0$, the Dirac measure concentrated at $0\in\IZ^Q$. 

Here, we used the notation $\cM_P(\IZ^Q)$ for the set of probability measures on $\IZ^Q$ with the $\sigma$-algebra being implied to be the power set of $\IZ^Q$.

For $n\in\IN_0,i\in Q$, define independent $\mu_i$-distributed random vectors $\xi_i(n)=\left(\xi_{i,j}(n)\right)_{j\in Q}$.
We can define the corresponding \Polya urn process to be the $\IZ_{\geq0}^Q$-valued Markov process $(X(n))_{n\in\IN_0}$ starting in the initial configuration, where at every time $n=1,2,\dots$, a ball of colour $i$ is drawn with probability $\frac{a_iX_i(n)}{\left<a,X(n)\right>}$, after which the drawn ball and $\xi_{i,j}(n)$ balls of colour $j$ for each $j\in Q$ are added to the urn. Here, $\left<\cdot\ ,\ \cdot\right>$ denotes the standard scalar product on $\IR^Q$. The process is stopped as soon as $\left<a,X(n)\right>=0$ in which case we say that the urn has become \emph{essentially extinct}. This process is defined uniquely in distribution by the data of a \Polya urn.

The \Polya urn process can be thought of as having an urn containing $X_i(n)$ balls of type $i$ at time $n$. From this urn, one ball is drawn at random with probability proportional to the activity of its type. If the drawn ball is of type $i$, then, independently of everything else, the ball itself together with $\xi_{i,j}(n)$ balls of type $j$ are added back into the urn for all types $j\in Q$, where we interpret $\xi_{i,i}(n)=-1$ as removing the drawn ball again. Note that the condition $\xi_{i,j}(n)\geq -\gd_{ij}$ implies that we never remove balls other than the one drawn from the urn, therefore ensuring that the amount of balls of each type always stays non-negative. Finally, the stopping condition $\left<a,X(n)\right>=0$ occurs when there are no balls of positive activity in the urn.

%% END OF PRELIMINARIES -- FUN BEGINS BELOW %%

%%%%%%%%%%%%%%%%%%%%%%%%%%%%%%%%%%%%%
\section{The semiring of \Polya urns} \label{sec:semiring}
%%%%%%%%%%%%%%%%%%%%%%%%%%%%%%%%%%%%%

%%%%%%%%%%%%%%%%%%%%%%%%%%%%%%%%%%%%%%%%%%%%%%%%%%%%%%%%%%%%%%
\subsection*{Embeddings of urns}

For the rest of the paper, unless the urns in question are specified concretely, we will consider two \Polya urns $\cU$ and $\cU'$, with the following data:
\begin{equation}\label{eq:2urns}
 \cU=\left(Q,\mu=(\mu_i)_{i\in Q},a,X(0)\right)\ \text{ and }\ 
 \cU'=\left(Q',\mu'=(\mu'_i)_{i\in Q'},a',X'(0)\right)
\end{equation}
\begin{defin}\label{defin:emb}
 We say that $\cU$ embeds strictly into $\cU'$ if there exists an injection $\gp:Q\injto Q'$ subject to the following three conditions:
 \begin{enumerate}[(i)]
  \item The transition measures satisfy $\mu'_{\gp(i)}=\hat\gp_* \mu_i$ for all $i\in Q$. 
  \item The activities are compatible via $a'_{\gp(i)}=a_i$ for all $i\in Q$. 
  \item The initial configurations satisfy $X'_{\gp(i)}(0)=X_i(0)$ for all $i\in Q$. 
 \end{enumerate}
 Here, $\hat\gp_* \mu_i$ is the push-forward measure of $\mu_i$ along $\hat\gp$.
\end{defin}

Observe that the class of all \Polya urns form a category, where the morphisms are given by strict embeddings. It is easily verified that two urns are isomorphic in this category iff $\gp$ is a bijection, in which case we call the urns in question \emph{strictly isomorphic}.
In other words, two \Polya urns are strictly isomorphic if their only distinction is the labelling of their colours. The class of all strict isomorphism classes of \Polya urns will be denoted by $\PU$.

\begin{rem}
 The attribute ``strictly'' has the following reason: It is possible to define embeddings of \Polya urns via their urn processes. Indeed, if $Y$ is any $\IZ^Q_{\geq0}$-valued Markov process starting in $X(0)$ and adapted to a filtration $\scr F$, then we could say that $X$ embeds into $Y$ if there is an increasing sequence of $\scr F$-stopping times $(\tau_n)_{n\in\IN}$ such that the processes $(X_n)_{n\in\IN}$ and $\left(Y_{\tau_n}\right)_{n\in\IN}$ coincide in distribution. One instance of this is the Athreya-Karlin embedding \cite{AK68}.
\end{rem}

%%%%%%%%%%%%%%%%%%%%%%%%%%%%%%%
\subsection*{Zero and unit urn}
The above notion of \Polya urn does not prevent us from considering rather trivial urn models: For example, choosing $Q=\emptyset$ determines a unique urn containing zero balls of no colours. We will call this the \emph{zero urn} and denote it by $\cP_\emptyset$. Similarly, for a singleton set $Q=\{q\}$, we can choose $\mu_q=\gd_0$, $a_q=0$ and $X(0)=1$. We will call this the \emph{unit urn} and denote it by $\cP_0$ (The zero in the index reflects the activity of the colour, see also section~\ref{ssec:slow} below). Note that both $\cP_\emptyset$ and $\cP_0$ are essentially extinct already at time zero. In this regard, the choice $X(0)=1$ may seem arbitrary, but it ensures that $\cP_0$ behaves as multiplicative unit for a product operation defined below.

%%%%%%%%%%%%%%%%%%%%%%%%%%%%%
\subsection*{Disjoint unions} 
The disjoint union of two \Polya urns should be thought of as pouring all the balls from $\cU$ and $\cU'$ into one larger, common urn from which balls are drawn. If the drawn colour is from $\cU$ proceed just as one would have done in $\cU$, and analogously, if the drawn colour is from $\cU'$ proceed as one would have done in $\cU'$.

Formally, note first that the canonical injections $\gi_Q:Q\injto Q\sqcup Q'$ and $\gi_{Q'}:Q'\injto Q\sqcup Q'$ into the disjoint union of $Q$ and $Q'$ induce the following injections on the set of probability measures 
\begin{align*}
 (\hat\gi_Q)_*&:\cM_P\left(\IZ^Q\right)\injto\cM_P\left(\IZ^{Q\sqcup Q'}\right)\ 
 \text{ and}\\ 
 (\hat\gi_{Q'})_*&:\cM_P\left(\IZ^{Q'}\right)\injto\cM_P\left(\IZ^{Q\sqcup Q'}\right)
\end{align*}
which allow us to transport the measures in $\mu\sqcup\mu'$ to $\IZ^{Q\sqcup Q'}=\IZ^Q\times \IZ^{Q'}$. Following this construction, we define 
\[
  \mu^\sqcup:=\big((\hat\gi_Q)_*\mu_i\big)_{i\in Q} \sqcup \big((\hat\gi_{Q'})_*\mu'_i\big)_{i\in Q'},
\]
which is a family of probability measures on $\IZ^{Q\sqcup Q'}$ indexed by $Q\sqcup Q'$. For $i\in Q\sqcup Q'$, we moreover have $\supp \mu^\sqcup_i\ssq S_{Q\sqcup Q',i}$ in the notation of \eqref{eq:supp}. This can be seen by noting that e.g. $\supp\mu^\sqcup_i=S_{Q,i}\times\{0\}\ssq S_{Q\sqcup Q',i}$ for $i\in Q$, and similarly for $i\in Q'$. Finally, we define the activities to be $a^\sqcup := a\oplus a'$, and the initial configuration to be $X^\sqcup(0):=X(0)\oplus X'(0)$

\begin{defin}\label{defin:sqcup}
 Using the notation just introduced, we call
 \[
  \cU\sqcup\cU' := \left(Q\sqcup Q', \mu^\sqcup, a^\sqcup, X^\sqcup(0)\right)
 \]
 the disjoint union of the \Polya urns $\cU$ and $\cU'$. 
\end{defin}

Observe that this is a well-defined operation on $\PU$: If $\gp:Q\to\tilde Q$ and $\gp':Q'\to\tilde Q'$ yield strict isomorphisms between \Polya urns $\cU\cong\tilde\cU$ and $\cU'\cong\tilde\cU'$, respectively, then the induced map $(\gp\sqcup\gp'):Q\sqcup Q'\to \tilde Q\sqcup \tilde Q'$ induces a strict isomorphism $\cU\sqcup\cU'\cong \tilde\cU\sqcup\tilde\cU'$.

%%%%%%%%%%%%%%%%%%%%%%
\subsection*{Products}

In contrast to the preceding construction, the product of $\cU$ and $\cU'$ will contain balls coloured by pairs of colours $(q,q')\in Q\times Q'$, and if such a ball is drawn it either evolves according to its first coordinate (meaning that the replacements for $q$ from $\cU$ happen, but all balls involved will have $q'$ as the second coordinate in their colours) or according to its second coordinate (which works analogously), and the choice between the two options being random proportional to the activities of $q$ and $q'$. 

In order to construct such a \Polya urn from $\cU$ and $\cU'$ with colour set $Q^\times=Q\times Q'$, we define activities $a^\times:Q\times Q'\to \IR_{\geq0}$ given by $a_{(i,j)}=a_i+a'_j$ (so that $a^\times=a\krs a'$ up to a reordering of $Q^\times$), and initial configurations $X^\times_{(i,j)}(0)=X_i(0)X'_j(0)$ for $(i,j)\in Q\times Q'$, and it remains to define the probability measures in $\mu^\times$. 

Unlike for the canonical maps $\gi_Q$ and $\gi_{Q'}$ for the disjoint union $Q\sqcup Q'$, we are not interested in the projection maps $\pi_Q:Q\times Q'\to Q$ and $\pi_{Q'}:Q\times Q'\to Q'$ in order to realize product urns. Instead, assume that a ball of type $(i,j)$ has been drawn from $\cU\times\cU'$. Since one of the two coordinates of $(i,j)$ will remain fixed according to the description we gave at the beginning of this section, $\mu^\times_{(i,j)}$ should only be supported on $\IZ^{\{i\}\times Q'\cup Q\times\{j\}}$, which is a submodule of $\IZ^{Q\times Q'}$ of rank $|Q|+|Q'|-1$. For brevity, but also since this is the wedge sum of (discrete) topological spaces (cf. \cite[p.153]{rotman13}), we write $Q\vee_{i,j} Q'$ for $\{i\}\times Q'\cup Q\times\{j\}$. So, consider the maps 
\begin{align*}
 \gk_Q&:Q\ni k\mapsto (k,j)\in Q\vee_{i,j} Q'\ \text{ and }\\
 \gk_{Q'}&:Q'\ni k\mapsto (i,k)\in Q\vee_{i,j} Q'
\end{align*}
which are also the maps realising $Q\vee_{i,j} Q'$ as the following pushout in the category of sets:
\begin{center}
 \begin{tikzcd}
  \{*\} \arrow[r, "*\mapsto i", hook] \arrow[d, "*\mapsto j"', hook] \arrow[rd, phantom, "\lrcorner", very near end] & Q \arrow[d, "\gk_Q"]           \\
  Q' \arrow[r, "\gk_{Q'}"']                                                     & Q\vee_{i,j} Q'
 \end{tikzcd}
\end{center}
Recall that this means (by the definition of a pushout, see e.g. \cite[p.68f]{ML98}) that for any other set $S$ with maps $\ga_Q:Q\to S$ and $\ga_{Q'}:Q'\to S$ such that $\ga_Q(i)=\ga_{Q'}(j)$, there is a unique $\beta:Q\vee_{i,j} Q' \to S$ such that the following diagram commutes: 
\begin{center}
 \begin{tikzcd}
  \{*\} \arrow[r, "*\mapsto i", hook] 
        \arrow[d, "*\mapsto j"', hook]
  & Q \arrow[d, "\gk_Q"']   
      \arrow[bend left, rdd, "\ga_Q"] 
  & \\
  Q' \arrow[r, "\gk_{Q'}"]
     \arrow[bend right, rrd, "\ga_{Q'}"']
  & Q\vee_{i,j} Q' \arrow[rd, "\exists!\beta" description, dashed] 
  & \\
  & & S
 \end{tikzcd}
\end{center}
The maps $\gk_Q$ and $\gk_{Q'}$ induce a transport of probability measures
\begin{align*}
 (\what{\gk_Q})_*&:\cM_P\left(\IZ^Q\right)\injto\cM_P\left(\IZ^{Q\vee_{i,j} Q'}\right)\ \text{ and}\\ 
 (\what{\gk_{Q'}})_*&:\cM_P\left(\IZ^{Q'}\right)\injto\cM_P\left(\IZ^{Q\vee_{i,j} Q'}\right)
\end{align*}
and we can now define $\mu^\times_{(i,j)}$ via
\begin{equation}\label{eq:xmeas}
 \mu_{(i,j)}^\times = \frac{a_i}{a_i+a'_j}(\what{\gk_Q})_*\mu_i + \frac{a'_j}{a_i+a'_j}(\what{\gk_{Q'}})_*\mu'_j
\end{equation}
unless $a_i+a'_j=0$, in which case we define it to be the Dirac measure concentrated at 0, i.e. $\mu_{(i,j)}^\times := \gd_0$. Observe that this construction is consistent with the requirement \eqref{eq:supp} for the support of $\mu^\times_{(i,j)}$.

\begin{defin}\label{defin:x}
 Setting $\mu^\times=\left(\mu_{(i,j)}^\times\right)_{(i,j)\in Q\times Q'}$ and using the notation introduced above, we call
 \[
  \cU\times\cU' = \left(Q\times Q', \mu^\times, a^\times, X^\times(0)\right)
 \]
 the product urn of $\cU$ and $\cU'$.
\end{defin}

\begin{prop}\label{prop:xwell}
 The product of \Polya urns, as constructed in Definition \ref{defin:x}, is a well-defined operation on $\PU$.
\end{prop}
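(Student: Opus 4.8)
The plan is to mirror the argument sketched after Definition~\ref{defin:sqcup} for disjoint unions. Given strict isomorphisms $\gp:Q\to\tilde Q$ and $\gp':Q'\to\tilde Q'$ realising $\cU\cong\tilde\cU$ and $\cU'\cong\tilde\cU'$, I would show that the product map $\psi:=\gp\times\gp':Q\times Q'\to\tilde Q\times\tilde Q'$ realises a strict isomorphism $\cU\times\cU'\cong\tilde\cU\times\tilde\cU'$. Since $\psi$ is a bijection, it then suffices to verify the three conditions of Definition~\ref{defin:emb} for $\psi$.

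Conditions (ii) and (iii) are immediate unwindings of the definitions: for the activities, $\tilde a^\times_{\psi(i,j)}=\tilde a_{\gp(i)}+\tilde a'_{\gp'(j)}=a_i+a'_j=a^\times_{(i,j)}$, and for the initial configurations, $\tilde X^\times_{\psi(i,j)}(0)=\tilde X_{\gp(i)}(0)\,\tilde X'_{\gp'(j)}(0)=X_i(0)X'_j(0)=X^\times_{(i,j)}(0)$, each using the corresponding compatibility coming from $\gp$ and $\gp'$ being strict isomorphisms.

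Condition (i) on the transition measures is the heart of the argument and the main obstacle. The key observation is that $\psi$ restricts to a bijection $Q\vee_{i,j}Q'\to\tilde Q\vee_{\gp(i),\gp'(j)}\tilde Q'$ compatible with the wedge-sum inclusions, in the sense that the squares
\begin{center}
 \begin{tikzcd}
  Q \arrow[r, "\gk_Q"] \arrow[d, "\gp"'] & Q\vee_{i,j}Q' \arrow[d, "\psi"] & Q' \arrow[l, "\gk_{Q'}"'] \arrow[d, "\gp'"] \\
  \tilde Q \arrow[r, "\tilde\gk_{\tilde Q}"'] & \tilde Q\vee_{\gp(i),\gp'(j)}\tilde Q' & \tilde Q' \arrow[l, "\tilde\gk_{\tilde Q'}"]
 \end{tikzcd}
\end{center}
commute, where $\tilde\gk_{\tilde Q}$ and $\tilde\gk_{\tilde Q'}$ denote the analogous inclusions into $\tilde Q\vee_{\gp(i),\gp'(j)}\tilde Q'$. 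By functoriality of the induced maps on free modules and of the push-forward of measures, this yields the identities $\what\psi_*\circ(\what{\gk_Q})_*=(\what{\tilde\gk_{\tilde Q}})_*\circ\what\gp_*$ and likewise for the primed maps.

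Applying $\what\psi_*$ to the defining expression~\eqref{eq:xmeas} and pushing it through these identities, I would invoke $\what\gp_*\mu_i=\tilde\mu_{\gp(i)}$ and $\what{\gp'}_*\mu'_j=\tilde\mu'_{\gp'(j)}$ from condition (i) of Definition~\ref{defin:emb}, together with $a_i=\tilde a_{\gp(i)}$ and $a'_j=\tilde a'_{\gp'(j)}$ (which also make the mixing coefficients agree), to recover exactly $\tilde\mu^\times_{\psi(i,j)}$. The degenerate case $a_i+a'_j=0$ is handled separately but trivially: both sides equal $\gd_0$, and $\what\psi_*\gd_0=\gd_0$ since $\what\psi$ is linear and hence fixes $0$. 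I expect the only delicate bookkeeping to lie in verifying the commutativity of the diagram above and the resulting naturality of the push-forwards; once these are in place, the remainder is a direct substitution.
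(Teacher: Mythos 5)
Your proposal is correct and follows essentially the same route as the paper: the paper also reduces condition (i) to the naturality identities $(\what{\tilde\gk_{\tilde Q}})_*\hat\gp_*=(\what{\gp_{res}})_*(\what{\gk_Q})_*$ for the restriction $\gp_{res}$ of $\gp\times\gp'$ to $Q\vee_{i,j}Q'$, and then substitutes into \eqref{eq:xmeas} exactly as you describe, treating $a_i+a'_j=0$ separately. The only cosmetic difference is that the paper obtains the restricted bijection via the universal property of the pushout defining $Q\vee_{i,j}Q'$, whereas you verify the commutativity of the two squares directly; both yield the same map and the same conclusion.
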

\begin{proof}
 Consider 4 urns $\cU,\cU',\tilde\cU$ and $\tilde\cU'$ with the accordingly labelled data, together with strict isomorphisms $\gp:\cU\to\tilde\cU$ and $\gp':\cU'\to\tilde\cU'$. We need to show $\cU\times\cU'\cong \tilde\cU\times\tilde\cU'$. 
 
 It is straightforward to verify that $\gp\times\gp':Q\times Q'\to \tilde Q\times\tilde Q'$ is a bijection, that $\tilde a^\times_{(\gp\times\gp')(i,j)}=a^\times_{(i,j)}$ and that $\tilde X^\times_{(\gp\times\gp')(i,j)}(0)=X^\times_{(i,j)}(0)$ for all $i\in Q,j\in Q'$. 
 
 For the compatibility of the measures, observe first that since the front and back squares in the following cube are pushouts, there exists a unique bijection $\gp^\times$ such that the entire cube commutes: 
 \begin{center}
 \begin{tikzcd}
  \{*\} \arrow[dd, "j"'] \arrow[rr, "i"] \arrow[equal,rd] &                                                   & Q \arrow[dd, "\kappa_Q"', near end] \arrow[rd, "\gp"]                          &                                                           \\
                                                   & \{*\} \arrow[dd, "\gp'(j)", near end] \arrow[rr, "\gp(i)"', near end]  &                                                                     & \tilde Q \arrow[dd, "\tilde\kappa_{\tilde Q}"]            \\
  Q' \arrow[rr, "\kappa_{Q'}", near start] \arrow[rd, "\gp'"']  &                                                   & Q\vee_{i,j} Q' \arrow[rd, "\exists!\gp^\times", dashed] &                                                           \\
                                                   & \tilde Q' \arrow[rr, "\tilde\kappa_{\tilde Q'}"'] &                                                                     & \tilde Q\vee_{\gp(i),\gp'(j)} \tilde Q'
 \end{tikzcd}
 \end{center}
 It is easy to check that the restriction
 \[
  \gp_{res}:=\gp\times\gp'|_{Q\vee_{i,j}Q'}^{\tilde Q\vee_{\gp(i),\gp'(j)}\tilde Q'}:Q\vee_{i,j} Q'\to \tilde Q\vee_{\gp(i),\gp'(j)} \tilde Q'
 \]
 can be used as a bijection in place of $\gp^\times$ so that therefore $\gp_{res}=\gp^\times$. Therefore, the induced push-forwards $\left(\what{\gp_{res}}\right)_*$ and $\left(\what{\gp^\times}\right)_*$ need to coincide as well. Using this, we obtain for $a_i+a'_j>0$ 
 \begin{align*}
  \tilde\mu^\times_{(\gp\times\gp')(i,j)}
  &= \frac{\tilde a_{\gp(i)}}{\tilde a_{\gp(i)}+\tilde a'_{\gp'(j)}}
      \left(\what{\tilde\gk_{\tilde Q}}\right)_*\tilde\mu_{\gp(i)}
    +\frac{\tilde a'_{\gp'(j)}}{\tilde a_{\gp(i)}+\tilde a'_{\gp'(j)}}
      \left(\what{\tilde\gk_{\tilde Q'}}\right)_*\tilde\mu'_{\gp'(j)}\\
  &= \frac{a_i}{a_i+a'_j}
      \left(\what{\tilde\gk_{\tilde Q}}\right)_*\hat\gp_*\mu_{i}
    +\frac{a'_j}{a_i+a'_j}
      \left(\what{\tilde\gk_{\tilde Q'}}\right)_*\hat\gp'_*\mu'_j.
 \end{align*}
 Here, due to the commutativity of the above diagram, we can apply
 \begin{align*}
  &\left(\what{\tilde\gk_{\tilde Q}}\right)_*\hat\gp_*
  =\left(\what{\gp^\times}\right)_*\left(\what{\gk_Q}\right)_*
  =(\what{\gp_{res}})_*\left(\what{\gk_Q}\right)_* \ \ \text{ and}\\
  &\left(\what{\tilde\gk_{\tilde Q'}}\right)_*\hat\gp'_*
  =(\what{\gp^\times})_*\left(\what{\gk_{Q'}}\right)_*
  =(\what{\gp_{res}})_*\left(\what{\gk_{Q'}}\right)_*
 \end{align*}
 to obtain 
 \begin{align*}
  \tilde\mu^\times_{(\gp\times\gp')(i,j)}
  &= \frac{a_i}{a_i+a'_j}(\what{\gp_{res}})_*\left(\what{\gk_Q}\right)_*\mu_i
    +\frac{a'_j}{a_i+a'_j}(\what{\gp_{res}})_*\left(\what{\gk_{Q'}}\right)_*\mu'_j\\
  &= (\what{\gp_{res}})_*\left(
       \frac{a_i}{a_i+a'_j}\left(\what{\gk_Q}\right)_*\mu_i
      +\frac{a'_j}{a_i+a'_j}\left(\what{\gk_{Q'}}\right)_*\mu'_j\right)\\
  &= (\what{\gp_{res}})_*\mu^\times_{(i,j)}
   = \left(\what{\gp\times\gp'}\right)_*\mu^\times_{(i,j)}
 \end{align*}
 as required (for the last step, we used that $\gp_{res}$ was merely a restriction of $\gp\times\gp'$ to the support of $\mu^\times_{(i,j)}$). The case where $a_i+a'_j=0$ can be treated analogously.
\end{proof}

\begin{thm}\label{thm:PUsr}
 The set $\PU$, equipped with $\sqcup$ as addition and $\times$ as multiplication is a commutative semiring, where $\cP_\emptyset$ and $\cP_0$ are the corresponding neutral elements.
\end{thm}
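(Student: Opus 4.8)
The plan is to verify each commutative-semiring axiom by producing, for the two urns it equates, a canonical bijection of the underlying colour sets and checking that it is a strict isomorphism, i.e.\ that it satisfies conditions (i)--(iii) of Definition~\ref{defin:emb}. In every case the compatibility (ii) of activities and (iii) of initial configurations is immediate, since $a^\sqcup=a\oplus a'$, $a^\times=a\krs a'$ and the corresponding initial configurations are built from $\oplus$, $\krs$ and pointwise products, all of which inherit commutativity, associativity and the relevant unit and annihilation laws from $\IR$ and from the set operations $\sqcup,\times$. The substantive point in each axiom is therefore condition (i), the compatibility of the families of transition measures, and for this I will repeatedly invoke the functoriality of $\gp\mapsto\hat\gp$ and of push-forwards, together with, where needed, the pushout technique of Proposition~\ref{prop:xwell}.

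I would first dispose of all axioms except associativity of $\times$ and distributivity. Commutativity and associativity of $\sqcup$ use the natural bijections $Q\sqcup Q'\cong Q'\sqcup Q$ and $(Q\sqcup Q')\sqcup Q''\cong Q\sqcup(Q'\sqcup Q'')$, under which $\mu^\sqcup$ transports correctly because each summand measure sits on its own factor $\IZ^Q\times\{0\}$ or $\{0\}\times\IZ^{Q'}$; the bijection $Q\sqcup\emptyset\cong Q$ makes $\cP_\emptyset$ a two-sided additive unit. The annihilation law $\cP_\emptyset\times\cU=\cP_\emptyset$ is automatic, the product having colour set $\emptyset\times Q=\emptyset$. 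For the multiplicative unit $\cP_0=(\{q\},\gd_0,0,1)$, the bijection $\gk_Q\colon Q\to Q\times\{q\}=Q\vee_{i,q}\{q\}$ works: since $a_q=0$, formula~\eqref{eq:xmeas} collapses to $\mu^\times_{(i,q)}=(\what{\gk_Q})_*\mu_i$ when $a_i>0$ and to $\gd_0=\mu_i$ when $a_i=0$, while $X_i(0)\cdot 1=X_i(0)$; this is exactly where the choice $X(0)=1$ in $\cP_0$ is needed. Commutativity of $\times$ follows from the swap $Q\times Q'\cong Q'\times Q$, which carries $Q\vee_{i,j}Q'$ onto $Q'\vee_{j,i}Q$ and under which~\eqref{eq:xmeas} is invariant, being symmetric in $(a_i,\mu_i,\gk_Q)$ and $(a'_j,\mu'_j,\gk_{Q'})$.

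The main obstacle is associativity of $\times$. Writing $\cU''=(Q'',\mu'',a'',X''(0))$ and substituting~\eqref{eq:xmeas} for the inner factor $\mu^\times_{(i,j)}$ into the outer application of~\eqref{eq:xmeas}, the denominator $a_i+a'_j$ cancels and the transition measure of $(\cU\times\cU')\times\cU''$ at $((i,j),k)$ becomes, for suitable maps $f_1,f_2,f_3$ into the triple product, the symmetric three-term combination
\begin{equation*}
 \frac{a_i}{a_i+a'_j+a''_k}\,(f_1)_*\mu_i
 +\frac{a'_j}{a_i+a'_j+a''_k}\,(f_2)_*\mu'_j
 +\frac{a''_k}{a_i+a'_j+a''_k}\,(f_3)_*\mu''_k,
\end{equation*}
and the same coefficients arise for $\cU\times(\cU'\times\cU'')$ at $(i,(j,k))$. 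It remains to identify the push-forward maps. On the left, $f_1$ is the composite $\ell\mapsto(\ell,j)\mapsto((\ell,j),k)$ of the two wedge inclusions, which by functoriality of $\what{(\cdot)}$ and of push-forward equals the map induced by $\ell\mapsto((\ell,j),k)$; on the right the corresponding push-forward is induced by $\ell\mapsto(\ell,(j,k))$, and the associativity bijection $(Q\times Q')\times Q''\cong Q\times(Q'\times Q'')$ intertwines the two. The analogous identifications hold for $\mu'_j$ (inclusion fixing $i,k$) and $\mu''_k$ (inclusion fixing $i,j$), so the two triple products agree. The degenerate cases in which some of $a_i,a'_j,a''_k$ vanish are handled identically, using that $\mu=\gd_0$ is forced for zero-activity colours.

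Finally, distributivity $\cU\times(\cU'\sqcup\cU'')\cong(\cU\times\cU')\sqcup(\cU\times\cU'')$ uses the set bijection $Q\times(Q'\sqcup Q'')\cong(Q\times Q')\sqcup(Q\times Q'')$. For a colour $(i,x)$ with $x$ in the $Q'$-summand, formula~\eqref{eq:xmeas} refers only to $a_i,a'_x,\mu_i,\mu'_x$ and the wedge $Q\vee_{i,x}Q'$, so it literally reproduces the measure of the factor $\cU\times\cU'$ on the right-hand side; the $Q''$-summand is symmetric. Since commutativity of $\times$ promotes this left distributive law to the right one, all semiring axioms are established.
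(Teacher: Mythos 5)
Your proposal is correct and follows essentially the same route as the paper: the routine axioms are dispatched by canonical bijections of colour sets, and the crux — associativity of $\times$ — is handled exactly as in the paper's proof, by collapsing both iterated applications of \eqref{eq:xmeas} to the same symmetric three-term convex combination with weights $a_i/(a_i+a'_j+a''_k)$, etc. Your treatment is somewhat more explicit about the unit, annihilation, and degenerate zero-activity cases, but this is elaboration rather than a different argument.
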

\begin{proof}
 Associativity, commutativity, and $\cP_\emptyset$ being neutral for $\sqcup$ can easily be seen from direct computation. The same holds true for $\cP_0$ being the neutral element for $\times$ and commutativity of multiplication. 
 
 For associativity of $\times$ and the distributive laws, consider three urns $\cU,\cU'$ and $\cU''$, where the data belonging to $\cU''$ is decorated by $''$, in analogy to \eqref{eq:2urns}. For brevity, denote by $\nu$ and $\nu'$ the collections of transition measures from $\cU\times\cU'$ and $\cU'\times\cU''$, respectively. Since $Q\times (Q'\times Q'')=(Q\times Q')\times Q''$ implies the desired equalities for the weights and initial distributions of $\cU\times(\cU'\times\cU'')$ and $(\cU\times\cU')\times\cU''$, only the equality of the measures is left to check. However, the replacement distribution for $(i,j,k)\in Q\times(Q'\times Q'')$ in the generic case where both $a_i+a'_j>0$ and $a'_j+a''_k>0$ is -- according to \eqref{eq:xmeas} -- given by
 \begin{multline*}
  \frac{a_i}{a_i+(a'_j+a''_k)}\left(\what{\gk_Q}\right)_*\mu_i
  +\frac{a'_j+a''_k}{a_i+(a'_j+a''_k)}\left(\what{\gk_{Q'\times Q''}}\right)_*\nu'_{(j,k)}\\
  =\frac{1}{a_i+a'_j+a''_k}\left(a_i\left(\what{\gk_Q}\right)_*\mu_i
   +a'_j\left(\what{\gk_{Q'}}\right)_*\mu'_j
   +a''_k\left(\what{\gk_{Q''}}\right)_*\mu''_k\right)
 \end{multline*}
 and the same expression arises for the replacement measure for $(i,j,k)$ in $(\cU\times\cU')\times\cU''$. The case where $a_i+a'_j=0$ or $a'_j+a''_k=0$ follows analogously. This shows associativity, and the distributive law is verified similarly. 
\end{proof}

%%%%%%%%%%%%%%%%%%%%%%%%%%%%%%%%%%%%%%%%%%%%%%
\section{Intensity as a morphism of semirings} \label{sec:intensity}
%%%%%%%%%%%%%%%%%%%%%%%%%%%%%%%%%%%%%%%%%%%%%%

For the analysis of \Polya urns the intensity matrix, generalising the transition matrix from finite-state Markov chains, is paramount. For an urn $\cU$ with $Q=\{1,...,q\}$ without loss of generality, this matrix is defined as
\begin{equation}
 A:=(a_j\ev\xi_{ji})_{i,j=1}^q
\end{equation}
which is a $q\times q$-matrix whose $(i,j)$-th entry is the product of $a_j$ and the expected change of the amount of type $i$ balls when a ball of type $j$ is drawn. (Here, we follow the convention in \cite{Ja04} as sometimes, the transpose of this matrix is taken as intensity matrix). Extending our choice of notation from \eqref{eq:2urns}, we denote by $A$ and $A'$ the intensity matrices of $\cU$ and $\cU'$, respectively. 

It immediately follows from the definition that all non-diagonal entries in $A$ are non-negative, and that $A_{ii}\geq -a_i$ for all $i\in Q$. Moreover, relabelling the colours of $\cU$ might lead to permuting the entries of $A$, and every possible such permutation arises by conjugating $A$ with a $q\times q$ permutation matrix. We hence define two $q\times q$-matrices $B, C$ to be \emph{permutation similar}, written $B\sim_p C$, whenever there exists a permutation matrix $P$ such that $C=P^{-1}BP$. By abuse of notation, we will not distinguish between a matrix and its equivalence class of permutation similar matrices, and denote both by the same symbol.

\begin{defin}\label{defin:IntMatp}
 Set $\Mat := \bigsqcup_{q\geq0} \IR^{q\times q}$, the set of all square matrices, and let $\IntMat := \{A\in\Mat: A_{ij}\geq0 \text{ whenever } i\neq j\}$. Finally, we define $\IntMatp := \IntMat / \sim_p$, which is well-defined since permutation similarity is an equivalence relation.
\end{defin}

The operations of direct sum and Kronecker sum make $\IntMatp$ into a semiring, see Corollary~\ref{cor:Intmp} in the appendix. Even more, we have the following result:

\begin{thm}\label{thm:Intense}
 The map $\Phi:\PU\to \IntMatp$ assigning to a \Polya urn the equivalence class of its intensity matrix is a morphism of semirings.
\end{thm}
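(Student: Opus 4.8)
The plan is to verify directly that $\Phi$ respects the two neutral elements and the two binary operations, since a semiring morphism is precisely a map preserving $0$, $1$, addition, and multiplication. The underlying structural reason is that $\Phi$ should intertwine $\sqcup$ with $\oplus$ and $\times$ with $\krs$; concretely, I expect to show $\Phi(\cU\sqcup\cU')=A\oplus A'$ and $\Phi(\cU\times\cU')=A\krs A'$, where $A,A'$ are the intensity matrices of $\cU,\cU'$ as in \eqref{eq:2urns}. Well-definedness is not an issue: relabelling colours conjugates the intensity matrix by a permutation matrix, so $\Phi$ descends to a map on strict isomorphism classes into $\IntMatp$, and both target operations are defined on $\sim_p$-classes by Corollary~\ref{cor:Intmp}.

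First I would dispatch the neutral elements. The zero urn $\cP_\emptyset$ has $Q=\emptyset$, so its intensity matrix is the empty $0\times 0$ matrix, which is the additive identity of $(\IntMatp,\oplus)$. The unit urn $\cP_0$ has a single colour of activity $a_q=0$, whence every entry $a_q\ev\xi_{qi}$ vanishes and its intensity matrix is the $1\times 1$ zero matrix; this is the multiplicative unit for $\krs$, since $A\krs(0)=A\otimes I_1+I_{\lvert Q\rvert}\otimes(0)=A$.

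Next I would handle additivity. For $\cU\sqcup\cU'$ the colour set is $Q\sqcup Q'$, the activities concatenate as $a\oplus a'$, and the transition measure at a colour $i\in Q$ is the push-forward along $\hat\gi_Q$, which embeds $\IZ^Q$ as $\IZ^Q\times\{0\}$. Hence the expected replacement vector $\ev\xi_i$ is unchanged in the $Q$-block and identically zero in the $Q'$-block, so the $(i,j)$-entry $a_j\ev\xi_{ji}$ reproduces $A$ on the $Q\times Q$ block, $A'$ on the $Q'\times Q'$ block, and $0$ on the off-diagonal blocks; this is exactly $A\oplus A'$ up to the permutation implicit in ordering $Q\sqcup Q'$, so equality holds in $\IntMatp$.

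The main obstacle is multiplicativity, where I expect the real work to lie. For a colour $(i,j)\in Q\times Q'$ with $a_i+a'_j>0$, the transition measure \eqref{eq:xmeas} is a convex combination, with weights $a_i/(a_i+a'_j)$ and $a'_j/(a_i+a'_j)$, of the two push-forwards along $\gk_Q$ and $\gk_{Q'}$, supported on the wedge $Q\vee_{i,j}Q'$. Since expectation is linear and commutes with the linear maps $\what{\gk_Q},\what{\gk_{Q'}}$, the expected replacement decomposes additively: the $\gk_Q$-part contributes $\ev\xi_i$ in the slice $Q\times\{j\}$ and the $\gk_{Q'}$-part contributes $\ev\xi'_j$ in the slice $\{i\}\times Q'$, with all other coordinates zero. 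Multiplying by the activity $a^\times_{(i,j)}=a_i+a'_j$ cancels the denominators in \eqref{eq:xmeas}, leaving the clean expression $a_i\ev\xi_{i,\cdot}$ on the first slice and $a'_j\ev\xi'_{j,\cdot}$ on the second. I would then check that, after indexing $Q\times Q'$ lexicographically, these entries assemble precisely into $A\otimes I_{\lvert Q'\rvert}+I_{\lvert Q\rvert}\otimes A'=A\krs A'$: the first-coordinate replacements populate the Kronecker blocks coming from $A\otimes I$, while the second-coordinate replacements populate those from $I\otimes A'$, with the degenerate Dirac case $a_i+a'_j=0$ forcing the corresponding row and column contributions to vanish consistently. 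Matching the block indices to the Kronecker-sum pattern is the delicate bookkeeping step; once the two slices are identified with the two summands $A\otimes I$ and $I\otimes A'$, the claim $\Phi(\cU\times\cU')=A\krs A'$ follows, and combined with the additive and unital checks this proves $\Phi$ is a morphism of semirings.
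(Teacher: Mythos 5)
Your proposal is correct and follows essentially the same route as the paper: checking the neutral elements directly, obtaining $A\oplus A'$ from the block structure induced by ordering $Q$ before $Q'$, and for multiplicativity computing the entry $a^\times_{(i,j)}\ev\bigl[\xi^\times_{(i,j),(k,l)}\bigr]=\one_{\{j=l\}}A_{ki}+\one_{\{i=k\}}A'_{lj}$ from the convex combination \eqref{eq:xmeas} and then assembling these entries in lexicographic order into $A\otimes I_{q'}+I_q\otimes A'$ (the paper packages these two steps as Lemmas~\ref{lem:Aprod1} and~\ref{lem:Aprod2}). The only quibble is the phrase about the degenerate case forcing ``row and column'' contributions to vanish --- it is the $(i,j)$-column that vanishes on both sides (since $a_i=a'_j=0$ forces $\mu_i=\mu'_j=\gd_0$ and hence zero columns of $A$ and $A'$), but this does not affect the argument.
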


The proof of Theorem~\ref{thm:Intense} relies on constructing suitable orderings on $Q\sqcup Q'$ and $Q\times Q'$ to obtain representatives $A^\sqcup$ for $\Phi(\cU\sqcup\cU')$ and $A^\times$ for $\Phi(\cU\times\cU')$. 

\begin{proof}
 It is straightforward to verify that $\Phi(\cP_\emptyset)=(\ )$ and $\Phi(\cP_0)=(0)$, so that $\Phi$ preserves neutral elements. For additivity, order the colours in $\cU\sqcup\cU'$ in such a way that $i<j$ holds whenever $i\in Q$ and $j\in Q'$. This yields
 \[
  A^\sqcup_{ij}=\begin{cases}
                 A_{ij}, & i,j\in Q\\
                 A'_{ij}, & i,j\in Q'\\
                 0, & \text{otherwise}
                \end{cases},
 \]
 which together with an order on $Q\sqcup Q'$ chosen as above gives the desired block-diagonal form, i.e. $\Phi(\cU)\oplus\Phi(\cU')=A\oplus A'\sim_p A^\sqcup=\Phi(\cU\sqcup\cU')$. Multiplicativity of $\Phi$ will be shown in the two lemmas below.
\end{proof}

\begin{lemma}\label{lem:Aprod1}
 For any $(i,j),(k,l)\in Q\times Q'$, using the notation from above, we have
 \begin{equation}\label{eq:Aprod}
  A^\times_{(k,l),(i,j)}=\mathds{1}_{\{j=l\}}A_{ki}+\one_{\{i=k\}}A'_{lj}.
 \end{equation}
\end{lemma}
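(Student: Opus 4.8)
We need to compute the entry of the intensity matrix $A^\times$ of the product urn at position $((k,l),(i,j))$, i.e. the $((k,l),(i,j))$-entry, which by definition equals $a^\times_{(i,j)}\,\ev\,\xi^\times_{(i,j),(k,l)}$ — the activity of type $(i,j)$ times the expected change in the number of balls of type $(k,l)$ when a ball of type $(i,j)$ is drawn. The goal is to show this equals $\one_{\{j=l\}}A_{ki}+\one_{\{i=k\}}A'_{lj}$.

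**Plan of proof.** First I would recall the three ingredients that define the product urn: the activities $a^\times_{(i,j)}=a_i+a_j'$, and the replacement measure $\mu^\times_{(i,j)}$ given by the convex combination in \eqref{eq:xmeas}, namely
\[
 \mu_{(i,j)}^\times = \frac{a_i}{a_i+a'_j}(\what{\gk_Q})_*\mu_i + \frac{a'_j}{a_i+a'_j}(\what{\gk_{Q'}})_*\mu'_j
\]
in the generic case $a_i+a_j'>0$. Since the intensity-matrix entry is $a^\times_{(i,j)}$ times an expectation under $\mu^\times_{(i,j)}$, the prefactor $a_i+a_j'$ will cancel against the denominators, leaving $a_i$ times an expectation under $(\what{\gk_Q})_*\mu_i$ plus $a_j'$ times an expectation under $(\what{\gk_{Q'}})_*\mu'_j$. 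This is the key simplification and it is where the specific normalisation of \eqref{eq:xmeas} pays off.

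**The main computation.** The heart of the argument is to evaluate the expected $(k,l)$-coordinate of the push-forward measures. Under $(\what{\gk_Q})_*\mu_i$, a vector $x\in\IZ^Q$ is sent by $\gk_Q:k\mapsto(k,j)$ into coordinates indexed by $Q\vee_{i,j}Q'$, so the mass landing in coordinate $(k,l)$ is nonzero only when $(k,l)$ lies in the image of $\gk_Q$, i.e. only when $l=j$; in that case the expected value of the $(k,l)$-coordinate is exactly $\ev\,\xi_{i,k}$. This produces the factor $\one_{\{l=j\}}$ and, after multiplying by $a_i$, the term $a_i\ev\,\xi_{i,k}=A_{ki}$. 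Symmetrically, $\gk_{Q'}:k\mapsto(i,k)$ contributes nonzero mass in coordinate $(k,l)$ only when $i=k$, giving $\one_{\{i=k\}}$ and, after multiplying by $a_j'$, the term $a_j'\ev\,\xi'_{j,l}=A'_{lj}$. Adding these yields \eqref{eq:Aprod}. I would carry this out by expressing $\ev\,\xi^\times_{(i,j),(k,l)}=\int x_{(k,l)}\,\mu^\times_{(i,j)}(\Di x)$ and using that push-forward commutes with integration of a coordinate functional.

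**Expected obstacle and the degenerate case.** The only subtle point is bookkeeping the indices carefully: the statement indexes the entry as $A^\times_{(k,l),(i,j)}$, so one must keep straight that the \emph{drawn} type is $(i,j)$ (providing the activity and measure) while $(k,l)$ indexes the \emph{observed} coordinate. The indicator functions arise purely from the wedge-sum geometry of $Q\vee_{i,j}Q'$, and a moment's care is needed to confirm that when both indicators fire — i.e. $i=k$ and $j=l$, the diagonal case — the two contributions add correctly and are consistent with the support condition. Finally I would dispose of the degenerate case $a_i+a_j'=0$: here both $a_i=0$ and $a_j'=0$, so $\mu^\times_{(i,j)}=\gd_0$ gives expectation $0$ for every coordinate, while the right-hand side of \eqref{eq:Aprod} also vanishes because $A_{ki}=a_i\ev\,\xi_{i,k}=0$ and $A'_{lj}=a_j'\ev\,\xi'_{j,l}=0$; thus \eqref{eq:Aprod} holds in this case too. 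This last check is routine but should be stated explicitly to match the convention set in \eqref{eq:xmeas}.
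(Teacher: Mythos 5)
Your proposal is correct and follows essentially the same route as the paper: both expand $A^\times_{(k,l),(i,j)}=a^\times_{(i,j)}\ev\bigl[\xi^\times_{(i,j),(k,l)}\bigr]$ using the convex combination \eqref{eq:xmeas}, extract the indicators from the fact that $\gk_Q$ and $\gk_{Q'}$ only hit coordinates with $l=j$ and $k=i$ respectively, cancel the prefactor $a_i+a'_j$, and dispose of the degenerate case $a_i+a'_j=0$ separately. Your write-up merely spells out the push-forward bookkeeping a bit more explicitly than the paper does.
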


\begin{proof}
 Let $\xi_{i}, \xi'_{j}$ and $\xi^\times_{(i,j)}$ be random variables distributed according to $\mu_i, \mu'_j$ and $\mu^\times_{(i,j)}$, respectively. Then, by conditioning on whether the urn evolves according to its first or second factor, we obtain
 \begin{align*}
  A^\times_{(k,l),(i,j)}
  &=a^\times_{(i,j)}\ev\left[\xi^\times_{(i,j),(k,l)}\right]\\
  &=(a_i+a'_j)\left(\one_{\{j=l\}}\frac{a_i}{a_i+a'_j}\ev[\xi_{ik}]
     +\one_{\{i=k\}}\frac{a'_j}{a_i+a'_j}\ev[\xi'_{jl}]\right)\\
  &=\mathds{1}_{\{j=l\}}A_{ki}+\one_{\{i=k\}}A'_{lj}. \qedhere
 \end{align*}
 In the case $a_i=a'_j=0$, both sides of \eqref{eq:Aprod} trivially vanish. 
\end{proof}

\begin{lemma}\label{lem:Aprod2}
 Let $q$ and $q'$ denote the number of types in $\cU$ and $\cU'$, respectively. Then, 
 \begin{equation}\label{eq:k+}
  A^\times \sim_p A\otimes I_{q'}+I_q\otimes A' = A\boxplus A'. 
 \end{equation}
\end{lemma}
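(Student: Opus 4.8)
The plan is to reduce the claimed permutation similarity to a single entrywise comparison, leaning entirely on the explicit formula \eqref{eq:Aprod} from Lemma~\ref{lem:Aprod1}. First I would record the standard index formula for a Kronecker product implied by the block definition of $\otimes$ in Section~\ref{sec:prelim}: if the rows and columns of $A\otimes B$ are indexed by pairs $(p,r)$ and $(s,t)$, where the first coordinate labels the outer block and the second the position within that block, then $(A\otimes B)_{(p,r),(s,t)}=A_{ps}B_{rt}$. Specialising $B$ to an identity matrix turns one of the two factors into an indicator.

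Applying this to the two summands of the Kronecker sum $A\krs A'=A\otimes I_{q'}+I_q\otimes A'$, and adopting the convention that a colour pair $(i,j)\in Q\times Q'$ is indexed with $i\in Q$ as the outer coordinate (matching $A$, of size $q$) and $j\in Q'$ as the inner coordinate (matching $A'$, of size $q'$), I would obtain
\[
(A\krs A')_{(k,l),(i,j)}=A_{ki}(I_{q'})_{lj}+(I_q)_{ki}A'_{lj}=\one_{\{l=j\}}A_{ki}+\one_{\{k=i\}}A'_{lj}.
\]
Comparing this term by term with \eqref{eq:Aprod} shows that the two expressions coincide exactly. Since $A^\times$ is only determined once an ordering on $Q\times Q'$ has been fixed, i.e. it is well-defined only up to permutation similarity, the identity above already yields $A^\times\sim_p A\krs A'$; choosing the lexicographic order with the first coordinate varying slowest, which is precisely the order realising the block structure of the Kronecker product, in fact gives equality on the nose.

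The only point requiring genuine care is the bookkeeping of the two Kronecker-product index conventions: one must check that the coordinate serving as the outer block in $A\otimes I_{q'}$ is the same coordinate $i$ of the colour pair, and that $I_q\otimes A'$ is read with the identical convention, so that the two indicators land on the correct pairs of coordinates. Because the conclusion is phrased up to $\sim_p$, any discrepancy in the chosen enumeration of $Q\times Q'$ is absorbed into the permutation and is therefore harmless; I would nonetheless fix the convention once at the outset and verify both summands against it, which simultaneously delivers the stronger unpermuted equality.
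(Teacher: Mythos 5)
Your proof is correct and follows essentially the same route as the paper: both arguments fix the lexicographic order on $Q\times Q'$ and match the entries of $A\krs A'$ against the formula \eqref{eq:Aprod} from Lemma~\ref{lem:Aprod1}, the only difference being that you verify the identity entrywise via $(A\otimes B)_{(p,r),(s,t)}=A_{ps}B_{rt}$ while the paper organises the same computation block by block.
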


\begin{proof}
 Note first that conjugating with a permutation matrix is the same as permuting the basis elements of the vector space or module on which the matrix acts as an endomorphism. Therefore, it suffices to show equality on the left side of \eqref{eq:k+} for one fixed ordering on the basis $Q\times Q'$ of $\Zsp(Q\times Q')$. To this end, and to simplify notation, assume w.l.o.g. $Q=\{1,2,...,q\}$ and $Q'=\{1,2,...,q'\}$ and equip both sets with the usual linear order $<$. This induces the lexicographic order on $Q\times Q'$ which is again linear. Assume that $A^\times$ is the matrix belonging to this choice of ordered basis.
 
 Proceed by splitting $A^\times$ into $q^2$ blocks of size $q'\times q'$. Due to the choice of ordering, the $i$-th block on the diagonal contains precisely the replacements of $(i,j)$ by $(i,l)$. By applying Lemma \ref{lem:Aprod1} to this block (and keeping in mind that on the diagonal, both summands of equation \eqref{eq:Aprod} contribute), we conclude that the $i$-th diagonal block equals $A_{ii}I_{q'}+A'$.
 
 Suppose now $i\neq k$ for $i,k\in Q$. Then, the $(i,k)$-th block of $A^\times$ has a non-zero entry at position $(i,j),(k,l)$ only if $j=l$, so on the diagonal of the block. This entry has to be -- again, due to Lemma \ref{lem:Aprod1} -- precisely $A_{ik}$. Thus, the $(i,k)$-th block equals $A_{ik}I_{q'}$.
 
 Putting everything together, we obtain
 \begin{align*}
  A_{\cU\times\cU'}&=
  \begin{pmatrix}
   A_{11}I_{q'}+A' & A_{12}I_{q'} & \cdots & A_{1q}I_{q'}\\
   A_{21}I_{q'} & A_{22}I_{q'}+A' & \cdots & A_{2q}I_{q'}\\
   \vdots & \vdots & \ddots & \vdots \\
   A_{q1}I_{q'} & A_{q2}I_{q'} & \cdots & A_{qq}I_{q'}+A'
  \end{pmatrix}\\
  &=A\otimes I_{q'} + I_q\otimes A',
 \end{align*}
 which concludes the proof of the lemma.
\end{proof}

By invoking Theorem 4.4.5 from \cite{HJ91}, we get as immediate consequence the following characterisation of the spectrum of $A^\times$:

\begin{cor}\label{cor:ghost}
 Respecting both algebraic and geometric multiplicity of eigenvalues, we have $\gs(A)+\gs(A')=\gs(A^\times)$, where the left-hand side has to be interpreted as Minkowski-addition, i.e. as forming the sum over all possible pairs of elements, including multiplicities. 
\end{cor}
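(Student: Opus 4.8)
The plan is to derive the identity straight from Lemma~\ref{lem:Aprod2} and the cited Theorem 4.4.5 of \cite{HJ91}. By Lemma~\ref{lem:Aprod2} the chosen representative $A^\times$ is permutation similar, and in particular similar, to the Kronecker sum $A\krs A' = A\otimes I_{q'}+I_q\otimes A'$. Similarity preserves the entire spectral picture -- the characteristic polynomial together with all the kernel dimensions $\dim\ker(M-\nu I)^k$ -- so both the algebraic and the geometric multiplicity of every eigenvalue agree for $A^\times$ and for $A\krs A'$; it therefore suffices to treat $A\krs A'$. This preliminary observation is also what lets the final statement legitimately speak of \emph{both} kinds of multiplicity at once.

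For the algebraic multiplicities I would quote Theorem 4.4.5 of \cite{HJ91} directly: listing the eigenvalues of $A$ as $\lambda_1,\dots,\lambda_q$ and those of $A'$ as $\mu_1,\dots,\mu_{q'}$, each with algebraic multiplicity, the $qq'$ eigenvalues of $A\krs A'$ are exactly the sums $\lambda_i+\mu_j$. Read as a multiset this is precisely $\gs(A)+\gs(A')$ formed with algebraic multiplicities, which is that half of the claim. The geometric half rests on the eigenvector content of the same theorem: from $Ax=\lambda x$ and $A'y=\mu y$ one computes
\[
 (A\krs A')(x\otimes y)=(Ax)\otimes y+x\otimes(A'y)=(\lambda+\mu)(x\otimes y),
\]
so tensors of eigenvectors are again eigenvectors with eigenvalue the sum. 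Decomposing $\IC^q=\bigoplus_\lambda V_\lambda$ and $\IC^{q'}=\bigoplus_\mu W_\mu$ into generalised eigenspaces gives $\IC^{qq'}=\bigoplus_{\lambda,\mu}V_\lambda\otimes W_\mu$, each block is $(A\krs A')$-invariant and carries the single eigenvalue $\lambda+\mu$, and collecting the blocks with $\lambda+\mu=\nu$ isolates the eigenvalue $\nu$. Counting the tensor eigenvectors $x\otimes y$ sitting inside these blocks then yields the geometric multiplicity of $\nu$ as $\sum_{\lambda+\mu=\nu}g_A(\lambda)\,g_{A'}(\mu)$, which is exactly the geometric reading of the Minkowski sum.

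I expect the geometric-multiplicity bookkeeping to be the main obstacle. The tensor eigenvectors coming from distinct pairs $(\lambda,\mu)$ are automatically linearly independent, since they lie in the independent blocks $V_\lambda\otimes W_\mu$; the delicate point is the reverse inclusion, namely that these tensors \emph{span} the whole eigenspace $\ker(A\krs A'-\nu I)$ and not merely a subspace of it. This spanning step is where the tensor structure of the Kronecker sum has to be exploited in detail: it is immediate in the diagonalisable case, where $\{x\otimes y\}$ is an eigenbasis and geometric multiplicity coincides with algebraic throughout, and in general it reduces, block by block on $V_\lambda\otimes W_\mu$, to analysing the kernel of the nilpotent operator $N_A\otimes I+I\otimes N_{A'}$ built from the nilpotent parts of $A$ and $A'$. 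Carrying out that blockwise kernel analysis, and confirming it returns the product count $g_A(\lambda)\,g_{A'}(\mu)$, is the one place where genuine work beyond the quotation of Theorem 4.4.5 is required.
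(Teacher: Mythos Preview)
Your approach---reduce to $A\krs A'$ via Lemma~\ref{lem:Aprod2} and then invoke Theorem~4.4.5 of \cite{HJ91}---is exactly what the paper does; the corollary is recorded there as an immediate consequence of that theorem with no further argument. For the algebraic multiplicities your proposal is therefore both correct and identical in spirit to the paper's treatment.

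The geometric-multiplicity elaboration, however, contains a genuine error. You assert that the blockwise kernel analysis of $N_A\otimes I+I\otimes N_{A'}$ will ``return the product count $g_A(\gl)\,g_{A'}(\mu)$'', but this is false. Take $A=A'=\left(\begin{smallmatrix}0&1\\0&0\end{smallmatrix}\right)$, which is a legitimate intensity matrix; each factor has the single eigenvalue $0$ with geometric multiplicity $1$, yet
\[
 A\krs A'=\begin{pmatrix}0&1&1&0\\0&0&0&1\\0&0&0&1\\0&0&0&0\end{pmatrix}
\]
has rank $2$ and hence geometric multiplicity $2$ at $0$, not $1\cdot 1=1$ (its Jordan type is $3+1$). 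In general the kernel of $N\otimes I+I\otimes N'$ for nilpotent $N,N'$ strictly contains $\ker N\otimes\ker N'$ whenever both nilpotents are nonzero; for single Jordan blocks of sizes $p$ and $q$ its dimension is $\min(p,q)$. So the ``spanning step'' you flag as the delicate point does not merely require more work---the target equality is wrong.

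This also shows that the phrase ``respecting \dots geometric multiplicity'' in the corollary, if read as the identity $g_{A^\times}(\nu)=\sum_{\gl+\mu=\nu}g_A(\gl)\,g_{A'}(\mu)$, is itself too strong. What survives---and what the paper actually uses downstream---is the algebraic-multiplicity statement together with the fact that tensors of eigenvectors are eigenvectors, both of which are precisely the content of Theorem~4.4.5 in \cite{HJ91}.
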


The spectrum of a matrix is a (finite) multiset, which in turn is nothing but a (finitely supported) map $m:\IC\to\IN_{0}$, and can be represented as a finite formal sum $S_m(x):=\sum_{z\in\IC} m(z)x^z$. Then, taking the disjoint union of multisets $m,m'$ corresponds to summing those representations, i.e. $S_{m\sqcup m'}(x)=S_m(x)+S_{m'}(x)$, whereas the Minkowski addition between $m$ and $m'$ of Corollary~\ref{cor:ghost} corresponds to the product of two formal sums, i.e. $S_{m+ m'}(x)=S_m(x)S_{m'}(x)$. In particular, this implies that under these operations, the set of finite complex multisets, denoted $\fmSet_\IC$, is yet another semiring where the empty set and the multiset represented by $x^0$, respectively, are the neutral elements. 

Corollary \ref{cor:ghost} then states that the map $\gs$ assigning to $A\in\IntMatp$ (even in $\Matp$, see appendix A) its spectrum is multiplicative. It can easily be checked that it is also additive and preserves both the zero and the unit elements, and therefore is another morphism of semirings. Thus, we obtain the pipeline promised in Theorem~\ref{thm:pipeline}:
\begin{center}
 \begin{tikzcd}
  \PU \arrow[r, "\Phi"] & \IntMatp \arrow[r, "\sigma"] & \fmSet_\IC
 \end{tikzcd}
\end{center}

%%%%%%%%%%%%%%%%%%%%%%%%%%%%%%%%%%%%%%%%%%%%%%%%%%%%
\section{Some additional properties of product urns} \label{sec:properties}
%%%%%%%%%%%%%%%%%%%%%%%%%%%%%%%%%%%%%%%%%%%%%%%%%%%%

The purpose of this section is to highlight some consequences of Theorem~\ref{thm:pipeline} for properties that are often desirable for \Polya urns. We follow the notation of \cite{Ja04}.

\begin{defin}\label{defin:dom}
 If $Q$ is the set of colours of $\cU$, we say that $i$ \emph{dominates} $j$, $i\succ j$, for $i,j\in Q$ if $(A_{ji})^n>0$ for some $n\geq0$. 
\end{defin}
The relation $\succ$ is reflexive and transitive, it therefore contains an equivalence relation such that two colours $i,j$ belong to the same equivalence class if and only if both $i\succ j$ and $j\succ i$. Additionally, $\succ$ induces a partial order on the set of equivalence classes, we say that the maximal class is dominating, if it exists. By choosing a linear extension of this partial order, we can permute the colours in such a way that $A$ is block triangular, hence $\gs(A)$ is the union of the spectra of the diagonal submatrices (which are the restrictions of $A$ to the colours contained in the same equivalence class). Therefore, we call an eigenvalue dominating if it is an eigenvalue of the restriction of $A$ to the dominating class. Finally, we say $\cU$ is \emph{irreducible} if every colour is dominating, that is, if there is only one equivalence class.
 
We will mainly be concerned with the following properties, where the numbering of the assumptions is such that it is consistent with Janson's paper \cite{Ja04}:
\begin{description}
 \item[(A1)] The replacements satisfy $\xi_{i,j}(0)+\gd_{ij}\geq0$ almost surely for all colours $i,j\in Q$.
 \item[(A2)] The replacements satisfy $\ev\left[\xi_{i,j}(0)^2\right]<\infty$ for all $i,j\in Q$.
 \item[(A3)] The largest real eigenvalue $\gl_1\in\gs(A)$ is positive. 
 \item[(A4)] The largest real eigenvalue is simple.
 \item[(A5)] There exists a dominating colour $i$ with $X_i(0)>0$.
 \item[(A6)] The largest real eigenvalue is dominating.
\end{description}
Note that assumption (A1) is already encoded in our definition of \Polya urns via \eqref{eq:supp} and therefore redundant here. Moreover, since the only negative entries in $A$ lie on the main diagonal, there is some scalar $\ga$ such that $A+\ga I_q$ is a non-negative matrix. Then Perron-Frobenius theory guarantees the existence of a real eigenvalue $\gl_1$ such that all further eigenvalues can be ordered descendingly by their real parts: $\gl_1\geq\Re \gl_2\geq\Re \gl_3\geq\dots$. 

For a more detailed discussion of the importance and consequences of these assumptions, see \cite{Ja04} and references therein, such as \cite{Se06} and \cite{kesten1967limit}.

Just as in \eqref{eq:2urns}, we will extend our notation to urns $\cU$ and $\cU'$, with data decorated with a $'$ belonging to $\cU'$ (so e.g. $\gl_1$ and $\gl'_1$ being the largest real eigenvalues of $\cU$ and $\cU'$, respectively). 

\begin{prop}\label{prop:assumptions}
 For two \Polya urns $\cU, \cU'$, the following statements hold:
 \begin{enumerate}[(a)]
  \item If $\cU$ has equivalence classes $\cC_1,\cC_2,\dots, \cC_\nu$ (w.r.t. the relation just introduced) and $\cU'$ has equivalence classes $\cC'_1,\cC'_2,\dots,\cC_{\nu'}$, then $\cU\times\cU'$ has equivalence classes of the form $\cC_s\times \cC'_t$ for $1\leq s\leq\nu$ and $1\leq t\leq\nu'$. In particular, $\cU\times\cU'$ is irreducible if and only if $\cU$ and $\cU'$ are. Additionally, a class $\cC_s\times \cC'_t$ is dominating if and only both $\cC_s$ and $\cC_t$ are dominating in their respective urns. 
  \item If $\cU$ and $\cU'$ both satisfy the assumption (AX) for some $X\in\{1,\dots,6\}$, then so does $\cU\times\cU'$.
 \end{enumerate}
\end{prop}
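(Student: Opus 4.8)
The plan is to prove the two parts of Proposition~\ref{prop:assumptions} separately, treating part (a) as the structural backbone and then reading off part (b) assumption by assumption. For part (a), my starting point is Lemma~\ref{lem:Aprod1}, which gives the explicit entries of $A^\times$. The key observation is that the domination relation $\succ$ on $Q\times Q'$ can be understood entirely through the off-diagonal block structure exhibited in the proof of Lemma~\ref{lem:Aprod2}: a single draw of $(i,j)$ either changes the first coordinate (governed by $A$, with $j$ fixed) or the second (governed by $A'$, with $i$ fixed). Concretely, I would show that $(k,l)\succ(i,j)$ in $\cU\times\cU'$ if and only if $k\succ i$ in $\cU$ \emph{and} $l\succ j$ in $\cU'$. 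The ``if'' direction uses that positive powers witnessing $k\succ i$ and $l\succ j$ can be spliced together: first move the first coordinate from $i$ to $k$ (keeping the second coordinate at $j$), then move the second from $j$ to $l$ (keeping the first at $k$), each step corresponding to a positive entry of $A$ or $A'$ respectively, and products of such positive entries remain positive. The ``only if'' direction follows because any path in $Q\times Q'$ projects coordinatewise to paths in $Q$ and $Q'$. From the biconditional, the equivalence classes of $\succ$ on $Q\times Q'$ are exactly the products $\cC_s\times\cC'_t$, and the induced partial order on classes is the product order; irreducibility ($\nu=\nu'=1$) and the characterisation of dominating classes then follow immediately.

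For part (b), I would handle each assumption in turn, leaning on part (a) and on Corollary~\ref{cor:ghost}, which gives $\gs(A^\times)=\gs(A)+\gs(A')$ as a Minkowski sum with multiplicities. Assumption (A1) is automatic by the remark in the text that it is encoded in~\eqref{eq:supp}. For (A2), I would use the defining mixture~\eqref{eq:xmeas}: under $\mu^\times_{(i,j)}$ the replacement vector is supported on $Q\vee_{i,j}Q'$ and equals (the pushforward of) either a $\mu_i$-distributed or a $\mu'_j$-distributed vector, so each second moment $\ev[(\xi^\times_{(i,j),(k,l)})^2]$ is a convex combination of a second moment from $\cU$ and one from $\cU'$, both finite by hypothesis. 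For (A3) and (A4), I would argue via the spectrum: by Perron--Frobenius the largest real eigenvalues are $\gl_1$ and $\gl'_1$, and the largest real element of the Minkowski sum $\gs(A)+\gs(A')$ is $\gl_1+\gl'_1$; hence the largest real eigenvalue of $A^\times$ equals $\gl_1+\gl'_1$, which is positive (giving A3) and, being a sum of two simple largest eigenvalues realised by a unique pair, is simple (giving A4). Assumption (A5) follows from part (a) together with $X^\times_{(i,j)}(0)=X_i(0)X'_j(0)$: choosing dominating colours $i,j$ with $X_i(0),X_j(0)>0$ yields a dominating colour $(i,j)$ in the product with positive initial count. For (A6), part (a) identifies the dominating class as $\cC\times\cC'$ where $\cC,\cC'$ are the dominating classes of $\cU,\cU'$; since $\gl_1,\gl'_1$ are eigenvalues of the restrictions to $\cC,\cC'$, the sum $\gl_1+\gl'_1$ is an eigenvalue of the restriction of $A^\times$ to $\cC\times\cC'$ by another application of Corollary~\ref{cor:ghost} to these restricted matrices.

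I expect the main obstacle to be the simplicity claim in (A4). Knowing that $\gl_1$ and $\gl'_1$ are each simple does not by itself guarantee that $\gl_1+\gl'_1$ has multiplicity one in the Minkowski sum: one must rule out accidental coincidences where some other pair $(\gl_s,\gl'_t)\neq(\gl_1,\gl'_1)$ satisfies $\gl_s+\gl'_t=\gl_1+\gl'_1$. Because $\gl_1$ is the \emph{largest real} eigenvalue (and strictly so, by (A4) applied to each factor, with all other eigenvalues having strictly smaller real part or equal real part but then forced below by simplicity), I would show that $\Re(\gl_s+\gl'_t)\leq\gl_1+\gl'_1$ with equality forcing $\Re\gl_s=\gl_1$ and $\Re\gl'_t=\gl'_1$, and then that simplicity of each top eigenvalue pins down $\gl_s=\gl_1$, $\gl'_t=\gl'_1$. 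Care is needed here regarding whether (A4) alone guarantees a strict spectral gap in real part, or merely simplicity; I would state explicitly which standard Perron--Frobenius consequence (strict dominance of the real part of $\gl_1$ over all other eigenvalues, valid under the irreducibility or primitivity context in which these assumptions are used) is being invoked, and confine the delicate estimate to that one lemma. The remaining assumptions reduce to bookkeeping via part (a) and the two established morphism properties, so the real work is concentrated in the structural biconditional of part (a) and this multiplicity argument.
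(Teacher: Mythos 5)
Your proposal is correct and follows the same overall skeleton as the paper's proof --- establish the domination biconditional for part (a), then verify (A1)--(A6) one by one using Lemma~\ref{lem:Aprod1} and Corollary~\ref{cor:ghost} --- but two of your local arguments genuinely differ, in ways worth recording. For part (a) the paper does not splice paths: it exploits the commutation of $A\otimes I_{q'}$ and $I_q\otimes A'$ to expand $(A\krs A')^n=\sum_{k}\binom{n}{k}A^{n-k}\otimes (A')^k$ and reads positivity (respectively vanishing) of the relevant entry off a single surviving binomial term. Your coordinatewise path argument is the combinatorial shadow of that computation and is arguably safer: since $A$ may have negative diagonal entries, isolating one positive term in the binomial sum requires a word on why no cancellation occurs, whereas reachability in the digraph of positive off-diagonal entries sidesteps this. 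For (A4) the paper disposes of the claim in one line by asserting $\Re\gl^\times_2=\max\{\Re(\gl_1+\gl'_2),\Re(\gl_2+\gl'_1)\}$; you correctly flag that simplicity of $\gl_1+\gl'_1$ in the Minkowski sum needs the strict inequality $\Re\gl<\gl_1$ for every eigenvalue $\gl\neq\gl_1$, and this does hold for intensity matrices because $A+\ga I$ is non-negative, so $\Re(\gl+\ga)\le|\gl+\ga|\le\gl_1+\ga$ with equality throughout forcing $\gl=\gl_1$ --- isolating this as a Perron--Frobenius lemma, as you propose, fills a step the paper leaves implicit. Finally, for (A6) the paper argues with the explicit eigenvector $v\otimes v'$ supported on $\cC_1\times\cC'_1$, while you apply Corollary~\ref{cor:ghost} to the restricted matrices; both are valid since, by Lemma~\ref{lem:Aprod1}, the diagonal block of $A^\times$ indexed by $\cC_1\times\cC'_1$ is exactly the Kronecker sum of the two restrictions.
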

\begin{proof}
 We first observe that
 \[
  (A\otimes I_{q'})(I_q\otimes A') = A\otimes A' = (I_q\otimes A')(A\otimes I_{q'})
 \]
 by the mixed product property for the Kronecker product. Therefore $A\otimes I_{q'}$ and $I_q\otimes A'$ commute with respect to the usual matrix multiplication, and we can apply the binomial formula to yield
 \begin{equation}\label{eq:power}
  (A\krs A')^n 
  %= \left(A\otimes I_{q'} + I_q\otimes A'\right)^n 
  = \sum_{k=0}^n \binom{n}{k} (A\otimes I_{q'})^{n-k}(I_q\otimes A')^k
  = \sum_{k=0}^n \binom{n}{k} A^{n-k}\otimes (A')^k.
 \end{equation}
 Now assume that $i\succ k$ in $\cU$ and $j\succ l$ in $\cU'$, and let $n$ and $n'$ respectively be the smallest non-negative integers such that $A^n_{ji}>0$ and $A^{n'}_{lk}>0$. Then, $(i,j)\succ(k,l)$ since
 \begin{align}\label{eq:xdom}
  (A^\times)^{n+n'}_{(i,j),(k,l)} 
  &= \sum_{m=0}^{n+n'} \binom{n+n'}{m} A^{n+n'-k}_{ki}\cdot(A')^k_{lj}\notag\\
  &= \binom{n+n'}{n'} A^n_{ki} \cdot (A')^{n'}_{lj} >0
 \end{align}
 by virtue of equation~\eqref{eq:power}. Conversely, if $i$ is not dominating over
 $k$ or $j$ is not dominating over $l$ (w.l.o.g. we assume the former), then $A^n_{ki}=0$ for all $n\geq0$, and the analogous computation to \eqref{eq:xdom} shows that $(A^\times)^n_{(k,l),(i,j)}$ vanishes for all $n$ as well. Thus, $(i,j)$ dominates $(k,l)$ if and only if both $i$ and $j$ dominate $k$ and $l$, respectively, and the claims in (a) follow. 
 
 \textbf{(A1)} This was already remarked when constructing the measures $\mu^\times_{(i,j)}$, immediately above Definition~\ref{defin:x}.
 
 \textbf{(A2)} Consider colours $(i,j)$ and $(k,l)$ from $\cU\times\cU'$. Similarly to the proof of Lemma~\ref{lem:Aprod1}, by conditioning on the urn according to which the ball $(i,j)$ evolves upon being drawn, we obtain:
 \begin{equation*}
  \xi^\times_{(i,j),(k,l)}(0)
  \stackrel{\scr D}{=} \one_{\{j=l\}}\frac{a_i}{a_i+a'_j}\xi_{i,k}(0)+\one_{\{i=k\}}\frac{a'_j}{a_i+a'_j}\xi'_{j,l}(0).
 \end{equation*}
 The claim now follows because a linear combination of $L^2$-random variables is again $L^2$.
 
 \textbf{(A3)} By Corollary~\ref{cor:ghost} the spectrum of $A^\times$ consists of all sums of the form $\gl+\gl'$, where $\gl$ and $\gl'$ are eigenvalues of $A$ and $A'$, respectively. Hence the eigenvalue $\gl_1^\times$ for $A^\times$ with the largest real part is given by $\gl_1+\gl'_1$ which by assumption (A3) is the sum of two positive reals, and therefore itself positive. 
 
 \textbf{(A4)} This follows similarly to the verification of (A3); in fact, the real part of the second eigenvalue of $A^\times$ is given by 
 \[
  \Re\gl^\times_2=\max\{\Re(\gl_1+\gl'_2),\Re(\gl_2+\gl'_1)\}.
 \]
 \textbf{(A5)} Let $i$ be a dominating colour of $\cU$ with $X_i(0)>0$, and let $j$ be a dominating colour of $\cU'$ with $X'_j(0)>0$. According to part (a), $(i,j)$ is then a dominating colour of $\cU\times\cU'$, and  we have $X^\times_{(i,j)}(0)=X_i(0)X'_j(0)>0$ by Definition~\ref{defin:x}. 
 
 \textbf{(A6)} Denote by $\cC_1$ and $\cC'_1$ the dominating classes of $\cU$ and $\cU'$ respectively. Assumption (A6) then implies that there is an eigenvector $v$ to $\gl_1$ and an eigenvector $v'$ to $\gl'_1$ such that $v_i=0$ for all $i\notin\cC_1$ and $v'_j=0$ for all $j\notin\cC'_1$. By Theorem 4.4.5 in \cite{HJ91} we know that $v\otimes v'$ is an eigenvector to the eigenvalue $\gl^\times_1=\gl_1+\gl'_1$ of $A^\times=A\krs A'$. Since $(v\otimes v')_{(i,j)}=v_i v'_j=0$ for $(i,j)\notin \cC_1\times\cC'_1$, we conclude that $\gl^\times_1$ is an eigenvalue to the dominating class, as required.
\end{proof}

\begin{rem}
 It is well-known (see e.g. \cite{athreya1969limit}) that the asymptotic behaviour of the urn process depends on the relation between the largest two eigenvalues. To be precise, one will get different results for the cases $\gl_1>2\Re \gl_2$, $\gl_1=2\Re \gl_2$, and $\gl_1<2\Re\gl_2$, see e.g. Corollaries 3.16-18 and Theorems 3.22-24 in \cite{Ja04}. As noted above, for the eigenvalues of the product urn, we have $\gl^\times_1=\gl_1+\gl'_2$, but $\gl^\times_2\in\{\gl_1+\gl'_2,\gl_2+\gl'_1\}$. Hence, even if both $\cU$ and $\cU'$ have small second eigenvalues, this does not automatically transfer to $\cU\times\cU'$.  
\end{rem}

However, we have:

\begin{cor}\label{cor:limit}
 Suppose $\cU$ and $\cU'$ satisfy (A1)-(A6). Let $v_1$ and $v'_1$ be the eigenvectors to $\gl_1$ and $\gl'_1$ that are normalized such that $\langle a,v_1\rangle = \langle a',v'_1\rangle=1$. Then, conditioned on essential non-extinction, 
 \[
  n^{-1}X^\times(n) \to S^{-1}(\gl_1+\gl'_1)(v_1\otimes v'_1)
 \] 
 almost surely for $n\to\infty$. Here, $S$ is the normalizing constant $S:=\langle \mathbf{1},v_1\rangle + \langle \mathbf{1},v'_1\rangle$, where $\mathbf{1}$ denotes the vector having entry 1 everywhere.
\end{cor}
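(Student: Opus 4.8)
The plan is to deduce the statement from Janson's almost-sure law of large numbers for generalised \Polya urns \cite{Ja04}, applied to the product urn $\cU\times\cU'$. By Proposition~\ref{prop:assumptions}(b), the assumptions (A1)--(A6) are inherited by $\cU\times\cU'$, so the relevant theorem of \cite{Ja04} applies and yields, conditioned on essential non-extinction,
\[
 n^{-1}X^\times(n)\to \gl_1^\times\, w\qquad\text{a.s.},
\]
where $\gl_1^\times$ is the largest real (Perron) eigenvalue of $A^\times$ and $w$ is the associated right eigenvector, normalised so that $\langle a^\times,w\rangle=1$. Because the draw probabilities in a \Polya urn are rescaled by the total activity $\langle a^\times,X^\times(n)\rangle$, the expected one-step drift stabilises to $\gl_1^\times w$ and the limit above is deterministic; this holds across all eigenvalue regimes, so the caveat of the preceding remark (which concerns only the second-order fluctuations) does not affect the law of large numbers.

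It remains to identify $\gl_1^\times$ and $w$ explicitly. By Corollary~\ref{cor:ghost} (equivalently, the verification of (A3) in Proposition~\ref{prop:assumptions}), the dominating eigenvalue of $A^\times=A\krs A'$ is $\gl_1^\times=\gl_1+\gl'_1$, and by Theorem 4.4.5 in \cite{HJ91}, used already in the (A6) argument, the vector $v_1\otimes v'_1$ is an eigenvector of $A^\times$ for this eigenvalue. Assumption (A4) for $\cU\times\cU'$ guarantees that $\gl_1^\times$ is simple, so $w$ must be a scalar multiple of $v_1\otimes v'_1$, the scalar being fixed by the normalisation $\langle a^\times,w\rangle=1$.

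The remaining computation is short. Since $a^\times_{(i,j)}=a_i+a'_j$ by Definition~\ref{defin:x} and $(v_1\otimes v'_1)_{(i,j)}=(v_1)_i(v'_1)_j$, we obtain
\begin{align*}
 \langle a^\times,v_1\otimes v'_1\rangle
 &=\sum_{i,j}(a_i+a'_j)(v_1)_i(v'_1)_j\\
 &=\langle a,v_1\rangle\langle\mathbf{1},v'_1\rangle+\langle\mathbf{1},v_1\rangle\langle a',v'_1\rangle\\
 &=\langle\mathbf{1},v'_1\rangle+\langle\mathbf{1},v_1\rangle=S,
\end{align*}
using the normalisations $\langle a,v_1\rangle=\langle a',v'_1\rangle=1$. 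Hence $w=S^{-1}(v_1\otimes v'_1)$, and substituting into the displayed limit gives $n^{-1}X^\times(n)\to S^{-1}(\gl_1+\gl'_1)(v_1\otimes v'_1)$ almost surely, as claimed.

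The main obstacle is not any single calculation but matching conventions with \cite{Ja04}: one must confirm that the cited theorem produces a deterministic limit of the form (Perron eigenvalue)$\times$(right eigenvector normalised by the activity pairing), and that the eigenvector so extracted is genuinely the nonnegative Perron vector attached to the dominating class rather than some other eigenvector of $\gl_1^\times$. The latter is ensured by Perron--Frobenius together with assumption (A6), exactly as in the proof of Proposition~\ref{prop:assumptions}; the former is where care with Janson's normalisation is essential, since a discrepancy there would rescale the limit by a constant.
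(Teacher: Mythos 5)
Your proposal is correct and follows essentially the same route as the paper: apply Janson's strong law (Theorem 3.21 in \cite{Ja04}, going back to Athreya--Ney) to the product urn after invoking Proposition~\ref{prop:assumptions} for (A1)--(A6), identify $\gl_1^\times=\gl_1+\gl'_1$ with eigenvector $v_1\otimes v'_1$, and fix the normalisation by computing $\langle a^\times, v_1\otimes v'_1\rangle=S$. Your extra step of invoking simplicity from (A4) to pin down $w$ as a scalar multiple of $v_1\otimes v'_1$ is a slightly more explicit justification of what the paper states as the eigenspace being spanned by $v_1\otimes v'_1$, but the argument is the same.
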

\begin{proof}
 Since under the assumptions $\cU\times\cU'$ satisfies (A1)-(A6) by Proposition~\ref{prop:assumptions}, it follows from Section V.9.3 in \cite{athreya1972branching} (although we use the version stated as Theorem 3.21 in \cite{Ja04} for consistency) that, conditioned on essential non-extinction, $n^{-1}X^\times(n) \to \gl^\times_1 v^\times_1$ a.s. for $n\to\infty$, where $v^\times_1$ is normalized such that $\langle a^\times, v^\times_1\rangle=1$. Since $\gl^\times_1=\gl_1+\gl'_1$ and because the eigenspace to $\gl^\times_1$ is spanned by $v_1\otimes v'_1$, it only remains to check that $S$ is the correct scaling factor. And indeed,
 \begin{align*}
  \langle a^\times, v_1\otimes v'_1\rangle 
  &= \sum_{(i,j)\in Q\times Q'} (a_i+a'_j)v_{1,i}v'_{1,j}\\
  &= \sum_{j\in Q'} v'_{1,j} \sum_{i\in Q} a_i v_{1,i} + \sum_{i\in Q} v_{1,i} \sum_{j\in Q'} a'_j v'_{1,j} \\
  &= \langle \mathbf{1},v'_1\rangle + \langle \mathbf{1},v_1\rangle,
 \end{align*}
 using the condition that $v_1$ and $v'_1$ were appropriately normalized. 
\end{proof}

\begin{rem}
 Another obstacle in trying to transfer limit statements to product urns are the expressions for the asymptotic covariance matrix $\gS^\times$ (cf. \cite[(3.19) and (3.20)]{Ja04}), crucially involving the term
 \[
  \int_0^s e^{tA}\otimes e^{tA'}\Di t,
 \]
 and it seems unlikely that there will be a closed expression for $\gS^\times$ in terms of the corresponding matrices $\gS$ and $\gS'$. Nonetheless, we will mention two expressions that might be helpful for computing the covariance of $\cU\times\cU'$:
 
 Continuing the notation of Corollary~\ref{cor:limit}, define matrices $B_i:=\ev[\xi_i\xi_i^T]$ and $B:=\sum_{i\in Q} v_{1,i}a_iB_i$. Let $\cE^{(s,t)}_{i,j}$ be the $s\times t$-matrix whose $(i,j)$-th entry is 1, with all other entries being zero. Then
 \begin{equation}\label{eq:Bxij}
  B^\times_{(i,j)} = \frac{a_i}{a_i+a'_j} B_i\otimes \cE^{(q'\times q')}_{j,j}
   + \frac{a'_j}{a_i+a'_j} \cE^{(q\times q)}_{i,i}\otimes B'_j
 \end{equation}
 and 
 \begin{equation}\label{eq:Bx}
  B^\times = \frac{1}{S}\left(B\otimes\diag(v'_1) + \diag(v_1)\otimes B'\right).
 \end{equation}
 We omit the details, pointing out that equation~\eqref{eq:Bxij} follows by the same methods as Lemma~\ref{lem:Aprod1}, and implies \eqref{eq:Bx}.
\end{rem}

%%%%%%%%%%%%%%%%%%%%%%%%%%%%%%%%
\section{Two special cases} \label{sec:examples}

In this section, we look at two special cases of product urns and present constructions that could potentially be useful in applications. 

\subsection{Slowing down \Polya urns}\label{ssec:slow} As stated in Theorem~\ref{thm:PUsr}, the urn $\cP_0$, consisting of one colour with activity 0, and starting with one ball, is the neutral element of multiplication of \Polya urns. 

We now consider the urn 
\begin{equation}\label{eq:Palpha}
 \cP_\ga := \left(\{*\}, \gd_0, \ga, 1\right)
\end{equation}
where $\ga\geq0$ is a parameter. For $\ga=0$, we obtain precisely the urn $\cP_0$ introduced above, and therefore have $\cU\times\cP_0 \cong \cP_0\times\cU \cong \cU$ for any \Polya urn $\cU$. For any other $\alpha$, there is a positive chance that a ball of colour $(i,*)$ drawn from $\cU\times\cP_\ga$ evolves according to the urn $\cP_\ga$, which means that the ball is simply returned back to the urn. This happens, according to \eqref{eq:xmeas}, exactly with probability $\frac{\ga}{a_i+\ga}$. In the special case where all activities in $\cU$ are the same positive number (w.l.o.g. 1), this probability does not depend on $i$, and on average every $(1+\ga)$-th draw will not change the current distribution of balls in the urn, thus effectively slowing the urn process down. 

Moreover, there is an embedding of urn processes in this case: Define the stopping times $\tau(k)$ to be the $k$-th time a ball drawn from $\cU\times\cP_\ga$ evolves according to $\cU$. Then, observing the urn process $X^\times$ for $\cU\times\cP_\ga$ at any time $n$ reveals that $X^\times(n)=X^\times(n-1)$ holds deterministically on the event $\bigcap_{k\geq0}\{\tau(k)\neq n\}$. In particular, the distribution of balls at time $\tau(k+1)$ is exactly the distribution of balls after $\tau(k)$. Therefore, the processes $(X(n))_{n\geq 0}$ and $\left(X^\times(\tau(n))\right)_{n\geq0}$ coincide in distribution.

\subsection{Random walks on product graphs} Let $G=(V,E)$ and $G'=(V',E')$ be finite simple graphs. Recall that the Cartesian product $G\square G'$ is the graph having the vertex set $V\times V'$, where vertices $(i,j)$ and $(k,l)$ are neighbours if and only if $i=k$ and $j$ and $l$ are neighbours in $G'$, or $j=l$ and $i$ and $k$ are neighbours in $G$. Also recall that a simple random walk on a (finite simple) graph $G$ is a Markov process $(X(n))_{n\geq0}$ on the state space $V$, where $X(n+1)$ is chosen uniformly at random among the neighbours of $X(n)$, independently of the previous history of the process. Any such random walk starting at $v_0\in V$ can be thought of as a generalised \Polya urn, where the colour set is given by $V$, the activity of a colour is set to be the degree of the corresponding vertex, the initial distribution is one ball of colour $v_0$, and the replacement rules are such that if a ball of colour $v\in V$ is drawn, it is replaced by a ball of colour $w$, where $w\in V$ is chosen at random among the neighbouring vertices of $v$. In particular, there will always be exactly one ball in the urn. Denote an urn constructed from the simple random walk on $G$ that started in $v_0$ by $\cU_{G,v_0}$. 

\begin{prop}\label{prop:xgraph}
 With the notation just introduced, the urns $\cU_{G,v_0}\times \cU_{G',v'_0}$ and $\cU_{G\square G',(v_0,v'_0)}$ are strictly isomorphic for any starting states $v_0\in V, v'_0\in V'$. In particular, the corresponding urn processes are equally distributed. 
\end{prop}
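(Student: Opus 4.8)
The plan is to establish a strict isomorphism between $\cU_{G,v_0}\times\cU_{G',v_0'}$ and $\cU_{G\square G',(v_0,v_0')}$ by exhibiting the identity bijection on colour sets and verifying the three conditions of Definition~\ref{defin:emb}. Both urns have colour set $V\times V'$, so the natural candidate for the isomorphism $\gp$ is the identity map, and the task reduces to checking that activities, initial configurations, and transition measures agree.

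First I would handle the activities and initial configurations, which are the easy bookkeeping. In $\cU_{G,v_0}$ the activity of colour $i$ is its degree $\deg_G(i)$, and similarly in $\cU_{G',v_0'}$. By Definition~\ref{defin:x}, the product activity is $a^\times_{(i,j)}=\deg_G(i)+\deg_{G'}(j)$, which is exactly $\deg_{G\square G'}(i,j)$ since in a Cartesian product of graphs the degree of a vertex is the sum of the degrees of its factors. For the initial configuration, $X^\times_{(i,j)}(0)=X_i(0)X_j'(0)=\one_{\{i=v_0\}}\one_{\{j=v_0'\}}=\one_{\{(i,j)=(v_0,v_0')\}}$, which matches the single starting ball of colour $(v_0,v_0')$ in $\cU_{G\square G',(v_0,v_0')}$.

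The substance of the proof is the agreement of transition measures, so I would focus attention there. On the right-hand side, $\cU_{G\square G',(v_0,v_0')}$ replaces a drawn ball of colour $(i,j)$ by a ball whose colour is a uniformly chosen neighbour of $(i,j)$ in $G\square G'$; by definition of the Cartesian product, such a neighbour is either $(k,j)$ with $k\sim_G i$ or $(i,l)$ with $l\sim_{G'}j$, chosen uniformly among the $\deg_G(i)+\deg_{G'}(j)$ candidates. On the left-hand side, formula~\eqref{eq:xmeas} gives
\begin{equation*}
 \mu^\times_{(i,j)}=\frac{\deg_G(i)}{\deg_G(i)+\deg_{G'}(j)}(\what{\gk_Q})_*\mu_i+\frac{\deg_{G'}(j)}{\deg_G(i)+\deg_{G'}(j)}(\what{\gk_{Q'}})_*\mu'_j,
\end{equation*}
where $\mu_i$ is uniform over the replacements corresponding to neighbours of $i$ in $G$ and $\gk_Q$ embeds these into the first factor at fixed second coordinate $j$. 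I would argue that with probability proportional to $\deg_G(i)$ the urn picks a uniform $G$-neighbour $k$ of $i$ and outputs $(k,j)$, and symmetrically for the second factor, so that the overall probability of any specific neighbour $(k,j)$ is $\tfrac{\deg_G(i)}{\deg_G(i)+\deg_{G'}(j)}\cdot\tfrac{1}{\deg_G(i)}=\tfrac{1}{\deg_G(i)+\deg_{G'}(j)}$, exactly the uniform weight on the right-hand side. The care needed here is to track the replacement vectors correctly: the replacement measure $\mu_i$ for the random-walk urn removes the drawn ball and adds the new one (net change $\one_{\{k\}}-\one_{\{i\}}$), and the push-forward $\what{\gk_Q}$ must carry this to the net change $\one_{\{(k,j)\}}-\one_{\{(i,j)\}}$ in the product colour set, which is precisely the replacement the Cartesian-product walk performs.

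The main obstacle I anticipate is purely notational rather than conceptual: one must unwind the push-forward $(\what{\gk_Q})_*\mu_i$ along the embedding $\gk_Q\colon k\mapsto(k,j)$ and confirm that it lands on exactly the product-walk replacement vectors, and one should address the degenerate possibilities (such as isolated vertices with degree zero, giving $a_i+a'_j=0$ and the convention $\mu^\times_{(i,j)}=\gd_0$) so that both urns agree there too. Once the two measures are seen to assign the same uniform weight to each admissible neighbour, condition~(i) of Definition~\ref{defin:emb} holds, the identity is a strict isomorphism, and the equality of the urn processes in distribution follows immediately since a \Polya urn is determined in distribution by its data.
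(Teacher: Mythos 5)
Your proposal is correct and follows essentially the same route as the paper: both arguments match the colour sets, activities (via $\deg_{G\square G'}(v,v')=\deg_G(v)+\deg_{G'}(v')$), and initial configurations, and then verify the transition measures by the same case split and the same cancellation $\tfrac{\deg_G(i)}{\deg_G(i)+\deg_{G'}(j)}\cdot\tfrac{1}{\deg_G(i)}=\tfrac{1}{\deg_G(i)+\deg_{G'}(j)}$. Your explicit tracking of the replacement vectors $\one_{\{k\}}-\one_{\{i\}}$ through the push-forward $(\what{\gk_Q})_*$ and your remark on the degenerate case $a_i+a'_j=0$ are slightly more careful than the paper's write-up, but not a different argument.
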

\begin{proof}
 By construction and Definition~\ref{defin:x}, both urns have the colour set $V\times V'$ and start with one ball of colour $(v_0,v'_0)$. Equality of the activity vectors follows from the fact that $\deg_{G\square G'}(v,v')=\deg_G(v) + \deg_{G'}(v')$. It remains to check that the replacements coincide. So, consider $v,w\in V$ and $v',w'\in W$. In both urns, a ball of colour $(v,v')$ cannot be replaced by a $(w,w')$-coloured ball if $(v,v')=(w,w')$ or if $v\neq w$ and $v'\neq w'$. Assume $v=w$ and $v'\neq w'$. The urn $\cU_{G\square G',(v_0,v'_0)}$ replaces $(v,v')$ by $(v,w')$ with probability $\frac{1}{\deg_G(v)+\deg_{G'}(v')}$. On the other hand, for $\cU_{G,v_0}\times \cU_{G',v'_0}$ to perform the same replacement, the ball with colour $(v,v')$ first needs to evolve according to $\cU_{G',v'_0}$ which happens with probability $\frac{\deg_{G'}(v')}{\deg_G(v)+\deg_{G'}(v')}$, and then (independently of it) $v'$ needs to be replaced by $w'$ in $\cU_{G',v'_0}$ which happens with probability $\frac{1}{\deg_{G'}(v'_0)}$. Therefore, the replacement probabilities agree with one another, and the case $v\neq w$ and $v'=w'$ can be treated analogously. 
\end{proof}

%% APPENDIX %%

%%%%%%%%%%%%%%%%%%%%%%%%%%%%%%%
\appendix
\section*{Appendix. Two matrix semirings} 
%%%%%%%%%%%%%%%%%%%%%%%%%%%%%%%

We use this appendix to give a short overview of the algebraic structure of square matrices under permutation similarity. While this is most likely not a novel result, the author has not been able to find it in the literature either. 

\begin{prop}\label{prop:Matp}
 The set $\Matp:=\Mat/\sim_p$ of equivalence classes of all square matrices under permutation similarity is a commutative semiring with addition $\oplus$ and multiplication $\krs$. Its zero-element is given by the (equivalence class of the) empty matrix $()$ and the neutral multiplicative element is the (equivalence class of the) $1\times1$-matrix $(0)$.
\end{prop}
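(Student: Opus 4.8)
The plan is to reduce every semiring axiom to a corresponding statement about finite index sets, exploiting the fact -- already used in the proof of Lemma~\ref{lem:Aprod2} -- that conjugating a matrix by a permutation matrix is the same as relabelling the basis of the module on which it acts. Concretely, I regard a representative $A\in\IR^{q\times q}$ as an endomorphism of $\IR^Q$ for a finite ordered index set $Q$ with $|Q|=q$, so that $A\oplus B$ is the endomorphism of $\IR^{Q\sqcup Q'}$ with the evident block action, while $A\krs B$ is the endomorphism of $\IR^{Q\times Q'}$ whose entry at position $\big((q_1,q_1'),(q_2,q_2')\big)$ equals $\one_{\{q_1'=q_2'\}}A_{q_1q_2}+\one_{\{q_1=q_2\}}B_{q_1'q_2'}$, exactly the formula appearing in Lemma~\ref{lem:Aprod1}. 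Before anything else I would record that both operations descend to $\Matp$: if $A=P^{-1}\tilde A P$ and $B=R^{-1}\tilde B R$ for permutation matrices $P,R$, then $P\oplus R$ and $P\otimes R$ are again permutation matrices conjugating $\tilde A\oplus\tilde B$ and $\tilde A\krs\tilde B$ onto $A\oplus B$ and $A\krs B$ respectively, so $\oplus$ and $\krs$ are well defined on equivalence classes.

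With this dictionary, each commutative-monoid axiom becomes a bijection of index sets. The bijections $Q\sqcup Q'\cong Q'\sqcup Q$ and $(Q\sqcup Q')\sqcup Q''\cong Q\sqcup(Q'\sqcup Q'')$ furnish permutation matrices realising commutativity and associativity of $\oplus$, while $Q\sqcup\emptyset=Q$ shows that the empty matrix $()$ is the additive identity (here even on the nose). Likewise $Q\times Q'\cong Q'\times Q$ and $(Q\times Q')\times Q''\cong Q\times(Q'\times Q'')$ yield commutativity and associativity of $\krs$: reading off the entry formula above, the permutation induced by each bijection carries one Kronecker sum onto the other, the point being that the symmetric form of the entry formula is manifestly invariant under swapping or reassociating the factors. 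For the multiplicative identity I would use the trivial relabelling $Q\times\{*\}\cong Q$ together with $A\krs(0)=A\otimes I_1+I_q\otimes(0)=A$, and for the absorbing property of the zero element the identity $A\krs()=A\otimes I_0+I_q\otimes()=()$, both of which hold without any genuine conjugation.

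The one axiom that genuinely mixes the two operations, and hence the step I expect to require the most care, is distributivity. Here I would invoke the set-level distributivity $Q\times(Q'\sqcup Q'')\cong (Q\times Q')\sqcup(Q\times Q'')$, whose induced permutation $P$ I claim satisfies $P^{-1}\big(A\krs(B\oplus C)\big)P=(A\krs B)\oplus(A\krs C)$. The cleanest way to verify this is again entrywise: the left-hand matrix has entry $\one_{\{w=w'\}}A_{uu'}+\one_{\{u=u'\}}(B\oplus C)_{ww'}$ at the position indexed by the pair $\big((u,w),(u',w')\big)$ with $u,u'\in Q$ and $w,w'\in Q'\sqcup Q''$; when $w,w'$ lie in the same summand this reproduces the corresponding entry of $A\krs B$ or of $A\krs C$, and when they lie in different summands both $\one_{\{w=w'\}}$ and $(B\oplus C)_{ww'}$ vanish, so the reindexing along $P$ carries the matrix precisely onto the block-diagonal matrix with diagonal blocks $A\krs B$ and $A\krs C$ and no surviving cross terms. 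Commutativity of $\krs$ then delivers the left distributive law for free. The only slightly delicate bookkeeping is pinning down the permutation implementing the distributivity bijection and confirming that no off-diagonal block survives; everything else is a routine transcription of the semiring axioms for finite sets under $\sqcup$ and $\times$, which is exactly the categorification of $\IN_0$ mentioned in the introduction.
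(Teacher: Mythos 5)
Your proof is correct, and it verifies the same list of axioms as the paper's, but it packages the verifications differently. The paper's proof works with explicit permutation matrices written in block form: commutativity of $\oplus$ and distributivity are each witnessed by a concretely displayed block matrix $P$, and commutativity of $\krs$ is handled by citing the fact that the commutation matrix realising $A\otimes B\sim_p B\otimes A$ depends only on the sizes of $A$ and $B$, so that a single conjugation carries both summands of $A\otimes I_q+I_p\otimes B$ onto those of $B\krs A$. You instead translate permutation similarity into relabelling of a finite index set and run every axiom through the single entrywise formula $(A\krs B)_{(q_1,q_1'),(q_2,q_2')}=\one_{\{q_1'=q_2'\}}A_{q_1q_2}+\one_{\{q_1=q_2\}}B_{q_1'q_2'}$; the manifest symmetry of that formula replaces the external commutation-matrix theorem, and your observation that both the indicator $\one_{\{w=w'\}}$ and the off-diagonal block of $B\oplus C$ vanish when $w,w'$ lie in different summands is exactly what makes the paper's block-permutation computation for distributivity work, stated without having to display the matrix. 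Your route is more self-contained and uniform; the paper's gives the explicit permutations, which is useful if one actually wants to compute with representatives. (One cosmetic slip: you prove $A\krs(B\oplus C)\sim_p(A\krs B)\oplus(A\krs C)$ directly and then say commutativity yields ``the left distributive law'' --- that is the one you already proved; commutativity supplies the right-hand version. Also, your well-definedness claim for $\krs$ tacitly uses the mixed product property, which the paper spells out, but this is standard and could equally be checked with your entry formula.)
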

\begin{proof}
 For the entire proof, let $A,B,C$ be matrices of size $p\times p, q\times q$ and $r\times r$, respectively. Let $P$ and $Q$ always denote permutation matrices of the required size. 
 
 We show first that the operations are indeed well-defined. For this, let $A'=P^{-1}AP$ and $B'=Q^{-1}BQ$. Then, one easily checks that $A'\oplus B'=(P\oplus Q)^{-1}(A\oplus B)(P\oplus Q)$ and, by using the mixed product property, that
 \begin{align*}
  A'\krs B'
  &=(P^{-1}AP)\krs (Q^{-1}BQ)\\
  &=(P^{-1}AP)\otimes (Q^{-1}I_qQ)+ (P^{-1}I_pP)\otimes (Q^{-1}BQ)\\
  &=(P^{-1}\otimes Q^{-1})(A\otimes I_q)(P\otimes Q)
     +(P^{-1}\otimes Q^{-1})(I_p\otimes B)(P\otimes Q)\\
  &=(P\otimes Q)^{-1}(A\krs B)(P\otimes Q).
 \end{align*}
 
 It is straightforward to verify that $0=(\ )$ acts as it should, i.e. that $A\oplus (\ )=(\ )\oplus A=A$ and $A\krs(\ )=(\ )\krs A=(\ )$. Moreover, $A\krs (0)=A\otimes I_1+I_p\otimes (0)=A\otimes I_1=A$ and analogous for $(0)\krs A$. Clearly, $\oplus$ is associative. The commutativity of $\oplus$ relies on $A\oplus B=P^{-1}(B\oplus A)P$, where $P$ is the matrix of block form
 \[
  P=\begin{pmatrix} 0 & I_q \\ I_p & 0 \end{pmatrix}.
 \]
 
 Associativity of the multiplication $\krs$ follows from direct calculation. For commutativity, we use that the permutation matrix for $A\otimes B\sim_p B\otimes A$ depends exclusively on the size of $A$ and $B$, but not on their entries (cf. Theorem 9.13 and its proof in \cite{BC08}). That is, there exists a $pq\times pq$-sized permutation matrix $P$ such that 
 \[
  A\krs B=P^{-1}(I_q\otimes A)P+P^{-1}(B\otimes I_p)P=P^{-1}(B\krs A)P.
 \]
 For distributivity, one checks that $A\krs(B\oplus C)=P^{-1}((A\krs B)\oplus(A\krs C))P$ where $P$ is of the block form
 \[
  P=\begin{pmatrix}
     I_q & 0 & 0 & 0 &  \cdots & 0 & 0 \\
     0 & 0 & I_q & 0 &  \cdots & 0 & 0 \\
     \vdots & \vdots & & & \cdots & & \vdots \\
     0 & 0 & 0 & 0 &        & I_q & 0 \\
     0 & I_r & 0 & 0 & \cdots & 0 & 0 \\
     0 & 0 & 0 & I_r &        & 0 & 0 \\
     \vdots & \vdots & & & \cdots & & \\
     0 & 0 & 0 & 0 & \cdots & 0 & I_r
    \end{pmatrix}
 \]
 
 Hence, $(\Matp,\oplus,\krs)$ is a commutative semiring. 
\end{proof}

Conjugating a matrix by a permutation matrix does not exchange diagonal and off-diagonal entries. Hence the quotient $\IntMat /\sim_p$ is well-defined, and we denote this set by $\IntMat_p$. In this way, we obtain: 

\begin{cor}\label{cor:Intmp}
 $(\IntMatp,\oplus,\krs)$ is a sub-semiring of $(\Matp,\oplus,\krs)$.
\end{cor}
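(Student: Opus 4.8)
The plan is to leverage Proposition~\ref{prop:Matp}: since $(\Matp,\oplus,\krs)$ is already known to be a commutative semiring, all of the identities (associativity and commutativity of both operations, distributivity) are inherited automatically on any subset that is closed under the two operations and contains the neutral elements. Thus the entire task reduces to three verifications: that $\IntMatp$ is a well-defined subset of $\Matp$, that it contains the two neutral elements $(\ )$ and $(0)$, and that it is closed under $\oplus$ and $\krs$.

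Well-definedness is exactly the observation made immediately before the statement: permutation similarity never moves an entry between the diagonal and the off-diagonal, so the defining condition $A_{ij}\geq 0$ for $i\neq j$ descends to $\sim_p$-classes. The neutral elements pose no difficulty, since the empty matrix $(\ )$ has no off-diagonal entries at all and the $1\times 1$ matrix $(0)$ likewise has none, so both trivially lie in $\IntMatp$. For closure under $\oplus$, I would simply read off the block form $A\oplus B=\left(\begin{smallmatrix} A & 0\\ 0 & B\end{smallmatrix}\right)$: every off-diagonal entry is either an off-diagonal entry of $A$, an off-diagonal entry of $B$, or one of the zeros in the two off-diagonal blocks, hence non-negative in all cases.

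The one step requiring genuine care---and the main obstacle, such as it is---is closure under the Kronecker sum. Here I would use the entrywise description of $A\krs B$ already established in Lemmas~\ref{lem:Aprod1} and~\ref{lem:Aprod2}: indexing rows and columns of $A\krs B$ by pairs, its $\big((i,j),(k,l)\big)$-entry equals $\one_{\{j=l\}}A_{ik}+\one_{\{i=k\}}B_{jl}$. An off-diagonal position is one with $(i,j)\neq(k,l)$, and I would split into the three cases $i=k,\ j\neq l$; then $i\neq k,\ j=l$; and finally $i\neq k,\ j\neq l$. In the first the entry reduces to the off-diagonal entry $B_{jl}$, in the second to the off-diagonal entry $A_{ik}$, and in the third to $0$; all three are non-negative. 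The point to emphasize is that the two indicators can never simultaneously select a diagonal entry $A_{ii}$ or $B_{jj}$ at an off-diagonal position, so no diagonal entry of a summand is ever promoted off the diagonal of $A\krs B$. With closure under both operations and membership of the neutral elements in hand, $(\IntMatp,\oplus,\krs)$ is a sub-semiring of $(\Matp,\oplus,\krs)$, as claimed.
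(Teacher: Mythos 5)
Your proof is correct and follows essentially the same route as the paper, which treats the corollary as an immediate consequence of the single observation that permutation similarity never exchanges diagonal and off-diagonal entries (making $\IntMatp$ a well-defined subset of $\Matp$) and leaves the closure of $\IntMatp$ under $\oplus$ and $\krs$ and the membership of the neutral elements implicit. You supply exactly those omitted verifications, and your entrywise case analysis for the Kronecker sum is the right way to do it; the only cosmetic point is that the formula $(A\krs B)_{(i,j),(k,l)}=\one_{\{j=l\}}A_{ik}+\one_{\{i=k\}}B_{jl}$ is a general fact about Kronecker sums rather than something proved in Lemmas~\ref{lem:Aprod1} and~\ref{lem:Aprod2}, which concern the specific intensity matrices of product urns.
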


\subsection*{Acknowledgements} The author wishes to thank his academic advisors, Cecilia Holmgren and Svante Janson, for their generous support and many helpful remarks and discussions. This work was partially supported by grants from the Knut and Alice Wallenberg Foundation, the Ragnar Söderberg Foundation, and the Swedish Research Council.

%%%%%%%%%%%%%%%%
%% REFERENCES %%
%%%%%%%%%%%%%%%%

\bibliographystyle{alphaurl}
\bibliography{produrnsbib}

\end{document}